\def\NZQ{\mathbb}               % the font for N,Z,Q,R,C
\def\ZZ{{\NZQ Z}}
\def\F2{{\NZQ F}_2}
\def\opn#1#2{\def#1{\operatorname{#2}}} % to make operators
\opn\chara{char} \opn\length{\ell} \opn\pd{pd} \opn\rk{rk}
\opn\projdim{proj\,dim} \opn\injdim{inj\,dim} \opn\rank{rank}
\opn\depth{depth} \opn\codepth{codepth} \opn\grade{grade}
\opn\height{height} \opn\embdim{emb\,dim} \opn\codim{codim}
\opn\Tr{Tr} \opn\bigrank{big\,rank}
\opn\superheight{superheight}\opn\lcm{lcm}
\opn\trdeg{tr\,deg}%
\opn\reg{reg} \opn\lreg{lreg} \opn\skel{skel}
\opn\Gr{Gr}
\opn\ann{ann}
\opn\sign{sign}
\opn\del{del}
\opn\div{div} \opn\Div{Div} \opn\cl{cl} \opn\Cl{Cl}
\opn\Spec{Spec} \opn\Supp{Supp} \opn\supp{supp} \opn\Sing{Sing}
\opn\Ass{Ass}\opn\fdepth{fdepth}
\opn\Ann{Ann} \opn\Rad{Rad} \opn\Soc{Soc}
\opn\Sym{Sym} \opn\Ker{Ker} \opn\Coker{Coker} \opn\Im{Im}
\opn\Hom{Hom} \opn\Tor{Tor} \opn\Ext{Ext} \opn\End{End}
\opn\Aut{Aut} \opn\id{id} \opn\ini{in} \opn\tr{tr}
\opn\nat{nat}\opn\it{it}
\opn\pff{proof}%   \pf exists already
\opn\Pf{proof} \opn\GL{GL} \opn\SL{SL} \opn\mod{mod} \opn\ord{ord}
\opn\aff{aff} \opn\con{conv} \opn\relint{relint} \opn\st{st}
\opn\lk{lk} \opn\cn{cn} \opn\core{core} \opn\vol{vol}
\opn\link{link} \opn\star{star} \opn\skel{skel} \opn\indeg{indeg}
\opn\Ass{Ass} \opn\Min{Min} \opn\sdepth{sdepth} \opn\depth{depth}
\opn\gr{gr}
\def\pot#1#2{#1[\kern-0.28ex[#2]\kern-0.28ex]}
\opn\dirlim{\underrightarrow{\lim}}
\opn\inivlim{\underleftarrow{\lim}}
\def\Implies{\ifmmode\Longrightarrow \else
     \unskip${}\Longrightarrow{}$\ignorespaces\fi}
\def\implies{\ifmmode\Rightarrow \else
     \unskip${}\Rightarrow{}$\ignorespaces\fi}
\def\iff{\ifmmode\Longleftrightarrow \else
     \unskip${}\Longleftrightarrow{}$\ignorespaces\fi}
\let\ol=\overline
\opn\d{d}
\newtheorem{Theorem}{Theorem}[section]
\newtheorem{Lemma}[Theorem]{Lemma}
\newtheorem{Corollary}[Theorem]{Corollary}
\newtheorem{Proposition}[Theorem]{Proposition}
\newtheorem{Example}[Theorem]{Example}
\newtheorem{Definition}[Theorem]{Definition}
\newtheorem{Remark}[Theorem]{Remark}
\let\epsilon\varepsilon
\let\phi=\varphi
\let\kappa=\varkappa
\def\qed{\ifhmode\textqed\fi
   \ifmmode\ifinner\quad\qedsymbol\else\dispqed\fi\fi}
\def\textqed{\unskip\nobreak\penalty50
    \hskip2em\hbox{}\nobreak\hfil\qedsymbol
    \parfillskip=0pt \finalhyphendemerits=0}
\def\dispqed{\rlap{\qquad\qedsymbol}}
\opn\Gin{Gin}
\def\FF{{\mathcal F}}
\def\TT{{\mathcal T}}
\opn\inii{in} \opn\inim{inm} \opn\rate{rate}
\numberwithin{equation}{section}
\title{2-dimensional vertex decomposable circulant graphs}
\keywords{}
\author{Giancarlo Rinaldo}
\address{Department of Mathematics\\
University of Trento\\
via Sommarive, 14\\
38123 Povo (Trento), Italy
%\email{rinaldo@dipmat.unime.it}
}
\author{Francesco Romeo}
\begin{document}
\maketitle
 \begin{abstract}
 Let $G$ be the circulant graph $C_n(S)$ with $S\subseteq\{ 1,\ldots,\left \lfloor\frac{n}{2}\right \rfloor\}$ and let $\Delta$ be its independence complex. We describe the well-covered circulant graphs with 2-dimensional $\Delta$ and construct an infinite family of vertex-decomposable circulant graphs within this family.
%Let $G$ be the circulant graph $C_n(S)$ with $S\subseteq\{ 1,\ldots,\left \lfloor\frac{n}{2}\right \rfloor\}$ and let $I(G)$ be its edge ideal in the ring $K[x_0,\ldots,x_{n-1}]$. We describe  the well-covered graphs whose edge ideal has Krull dimension $3$ and construct an infinite family of vertex-decomposable circulant graphs within this family.
\end{abstract}

\begin{quotation}
\noindent{\bf Key Words}: {Circulant graphs, Cohen-Macaulay, Vertex decomposability.}

\noindent{\bf 2010 Mathematics Subject Classification}:  Primary 13F55. Secondary 13H10 
\end{quotation}

\section*{Introduction}\label{sec:intro}
Let $n\in \mathbb{N}$ and $S\subseteq\{ 1,2,\ldots,\left \lfloor\frac{n}{2}\right \rfloor\}$. The \textit{circulant graph} $G:=C_n(S)$ is a graph with vertex set $\ZZ_n=\{0,\ldots,n-1\}$ and edge set $E(G) := \{ \{i, j\} \mid |j-i|_n \in S \}$ where $|k|_n=\min\{|k|,n-|k|\}$.

Let $R= K[x_0, \dots, x_{n-1}]$ be the polynomial ring on $n$ variables over a field $K$. The \textit{edge ideal} of $G$, denoted by $I(G)$, is the ideal of $R$ generated by all square-free monomials $x_i x_j$ such that $\{i,j\} \in E(G)$. Edge ideals of graphs have been introduced by Villarreal \cite{Vi} in 1990, where he studied the Cohen--Macaulay property of such ideals. Many authors have focused their attention on such ideals (e.g.\cite{HH}, \cite{MMCRTY}). A known fact about Cohen-Macaulay edge ideals is that they are \textit{well-covered}, that is all the maximal independent sets of $G$ have the same cardinality.
Despite the nice structure the circulant graphs have, it has been proved that is hard to compute their clique number (see \cite{CGV}), and hence the Krull dimension of $R/I(G)$.

%Nevertheless, some invariants have been studied by the commutatve algebra point of view (\cite{Ri},\cite{MTW},\cite{RR}).

In particular, some well-covered circulant graphs have been studied (see \cite{BH0}, \cite{BH1},\cite{MTW}, \cite{EMT} and \cite{Ri}). In \cite{MTW} and \cite{EMT} the authors studied well-covered circulant graphs that are Cohen-Macaulay. The most interesting families are the ones of the power cycle and its complement. In fact, these families contain Cohen-Macaulay edge ideal of Krull dimension $2$. Moreover, the infinite family of well-covered power cycles of Krull dimension $3$ has elements that are all Buchsbaum (\cite{MTW}). 
In addition, in \cite[Table 1]{EMT} the authors studied all the circulant graphs within $16$ vertices, by using a symbolic computation.
Among these, the Cohen-Macaulay ones with Krull dimension $3$ have $R/I(G)$ that is the tensor product of Cohen-Macaulay rings of Krull dimension 1 (e.g. $C_6 (3)$, $C_{9}(3)$, etc.).
We observe that the first non-trivial Cohen-Macaulay circulant graph with Krull dimension 3 is the Paley graph $C_{17}(\ol{1,2,4,8}$) (see Example \ref{P17}). In particular we verified through a \texttt{Macaulay2} computation that its independence complex is also vertex decomposable.
Hence a natural question arises:
``Is it possible to find an infinite family of circulant graphs of Krull dimension $3$ that are Cohen-Macaulay?''
%``Is it possible as in the case of power cycle to find an infinite family of circulant graphs of dimension $3$ that are Cohen-Macaualay?''

The idea is to find good properties on $n$ and $S$ to find such a family. In fact we will prove the following
\begin{Theorem}\label{2vd}
Let $G=C_{n}(\ol{1,2,4,\ldots,2^m,2^{m}-1})$ with $m\geq 3$ and $n=3\cdot 2^{m}$ and let $\Delta$ be its independence complex. Then $\Delta$ is a $2$-dimensional vertex decomposable simplicial complex with respect to  \[ [1,2,\ldots,\widehat{2}^m,2^{m}+1,\ldots,\widehat{2}^{m+1},\ldots,  n-1].\]
\end{Theorem}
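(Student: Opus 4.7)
The argument splits into two tasks: verifying $\dim\Delta = 2$, and verifying the vertex decomposability with respect to the specified order. I would work throughout with the characterization that a set is a face of $\Delta$ iff all its pairwise cyclic distances lie in $\overline{T}:=\{1,2,4,\ldots,2^m,2^m-1\}$, the connection set of the complement graph.

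\textbf{Dimension.} The set $H=\{0,2^m,2^{m+1}\}$ is the order-$3$ subgroup of $\ZZ_n$ (since $n=3\cdot 2^m$), and its three pairwise cyclic distances all equal $2^m\in\overline{T}$, so $H$ is an independent triple of $G$ and $\dim\Delta\ge 2$. For the upper bound I would show that no four elements of $\ZZ_n$ can have all six cyclic pairwise distances in $\overline{T}$. Writing the four vertices as $a_1<a_2<a_3<a_4$ with cyclic gaps $(g_1,g_2,g_3,g_4)$ summing to $n$, the six pairwise distances are the $|\cdot|_n$-minima of contiguous partial sums of the $g_i$; since $\max\overline{T}=2^m=n/3$, at least one gap exceeds $n/3$, and a short case analysis — exploiting that $\overline{T}$ consists of powers of $2$ with the single extra element $2^m-1$, and hence is rigid under additive combinations — eliminates all remaining configurations.

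\textbf{Vertex decomposability.} I proceed by induction along the given order. Let $v_1,v_2,\ldots$ denote the list and set $\Delta^{(1)}:=\Delta$, $\Delta^{(i+1)}:=\Delta^{(i)}\setminus v_i$. At each step I would verify (i) $v_i$ is a shedding vertex of $\Delta^{(i)}$, meaning every facet of $\Delta^{(i)}\setminus v_i$ is already a facet of $\Delta^{(i)}$; and (ii) $\lk_{\Delta^{(i)}}(v_i)$ is vertex decomposable with respect to the induced order on its vertex set. Since $\dim\Delta=2$, the link is a graph: its edges are the pairs $\{a,b\}$ making $\{v_i,a,b\}$ independent in $G$, and its vertex decomposability typically reduces to recognizing it as a disjoint union of paths and edges. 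The base case is reached when only $\{0,2^m,2^{m+1}\}$ remain; the resulting complex is the single $2$-simplex $H$, trivially vertex decomposable.

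\textbf{Main obstacle.} The crux is the shedding condition at each step. Removing $v_i$ threatens to promote the $2$-subset $F\setminus\{v_i\}$ of a facet $F$ containing $v_i$ into a facet of $\Delta^{(i+1)}$, violating shedding unless some \emph{replacement} vertex $w$, still present in $\Delta^{(i+1)}$, makes $(F\setminus\{v_i\})\cup\{w\}$ independent. The chosen order — which defers $0$, $2^m$, and $2^{m+1}$ to the end — is engineered precisely so that such replacements always exist, either among the three kept vertices or among later-indexed vertices, by exploiting the invariance of $\overline{T}$ under the translations $x\mapsto x\pm 2^m$ (a $\ZZ/3$-symmetry of $\ZZ_n$). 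Carrying out this replacement bookkeeping uniformly across all $n-3$ shedding steps, and simultaneously controlling the successive links, is the combinatorial heart of the argument.
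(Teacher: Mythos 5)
Your outline follows the standard shape (peel off vertices in the stated order, control links and shedding), but the parts you defer are exactly the content of the proof, and two of your guiding expectations are wrong. First, your anticipation that each link ``reduces to recognizing it as a disjoint union of paths and edges'' would, if accurate, kill the argument: by Lemma \ref{1dim} a pure $1$-dimensional complex is vertex decomposable if and only if it is \emph{connected}, so a disconnected union of paths appearing as some $\link_{\Delta^{(i)}}(v_i)$ is precisely the failure mode that must be excluded. What actually has to be proved --- and what the paper proves by an explicit description of $\mathcal{F}(\link_{\Delta^{(i)}}(v_i))$ in the three ranges $[1,2^m-1]$, $[2^m+1,2^{m+1}-1]$, $[2^{m+1}+1,n-1]$, using the cycles $\mathcal{G}_v$, $\mathcal{L}_v$, the paths $\mathcal{H}^l_v$, $\mathcal{P}^l_v$ and the bridging edges $\mathcal{B}^l_v$ --- is that every such link is a \emph{connected} $1$-dimensional graph. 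Your proposal names this verification ``the combinatorial heart'' and stops there, so the decisive step is missing.

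Second, your proposed mechanism for shedding is flawed. The distance set $\ol S=\{1,2,4,\ldots,2^m,2^m-1\}$ is not stable under shifting by $\pm 2^m$: for $m\ge 3$ we have $2\in\ol S$ but neither $2^m-2$ nor $|2^m+2|_n=2^m+2$ lies in $\ol S$, so for the facet $\{v,v+2,v+4\}$ the candidates $v\pm 2^m$ do \emph{not} give faces, and the ``$\ZZ/3$-symmetry'' replacement you invoke does not exist in general (replacements exist, but must be found case by case). Moreover this replacement bookkeeping is unnecessary: Lemma \ref{weldel} shows that for a pure connected $2$-dimensional complex, once every $\link_{\Delta_{i-1}}(v_i)$ is connected and $1$-dimensional and the final complex is the triangle $\{0,2^m,2^{m+1}\}$, the condition ``facets of the deletion are facets'' follows automatically, since an offending $1$-dimensional facet would force an isolated vertex in a later link. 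So the step you identify as the crux can be dispensed with, while the link analysis you gloss over cannot. Finally, two smaller omissions: purity of $\Delta$ (required for the notion of vertex decomposability used here) is never addressed --- the paper obtains it via Proposition \ref{welco} and Proposition \ref{2wel} --- and your upper bound $\dim\Delta\le 2$ rests on an unspecified ``rigidity under additive combinations,'' where the paper carries out the actual case analysis (C1)--(C2).
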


In Section \ref{sec:welc} we give a characterization of pure $2$-dimensional independence complexes of circulants (Proposition \ref{welco}) and explicit formulas for the $f$-vector and $h$-vector (Proposition \ref{2face} and Proposition \ref{hvebu}). In Section \ref{sec:proof}, we give the proof of Theorem \ref{2vd}. Moreover, we present an example, where $n=3\cdot 2^3$, to clarify the steps of the proof of Theorem \ref{2vd} (Example \ref{exam}).
Furthermore in Section \ref{sec:levG} we prove that any 2-dimensional vertex decomposable independence complex of circulants has Stanley-Reisner ring that is a level algebra (Theorem \ref{typ}). It is known that the Hilbert function of level algebras has nice properties (see \cite{GHMS}). Moreover when one talks about level algebras, the question about which ones are also Gorenstein algebras (see \cite{BH2}, \cite{GHMS} ) naturally arises. In this regard, in Proposition \ref{Gor} we prove that among the level algebras of Theorem \ref{typ}, the only Gorenstein algebra is $R/I(G)$ where $G=C_{6}(3)$.

\section{Preliminaries and the Paley example}
In this section we recall some concepts and notations on graphs and on simplicial complexes that we will use in the article. 

\noindent Set $V = \{x_1, \ldots, x_n\}$. A \textit{simplicial complex} $\Delta$ on the vertex set $V$ is a collection of subsets of $V$ such that: 1) $\{x_i\} \in \Delta$ for all $x_i \in V$; 2) $F \in \Delta$ and $G\subseteq F$ imply $G \in \Delta$.
An element $F \in \Delta$ is called a \textit{face} of $\Delta$. A maximal face of $\Delta$ with respect to inclusion is called a \textit{facet} of $\Delta$.

\noindent
The dimension of a face $F \in \Delta$ is $\dim F = |F|-1$, and the dimension of $\Delta$ is the maximum of the dimensions of all facets.
Moreover, if all the facets of $\Delta$ have the same dimension, then we say that $\Delta$ is \emph{pure}. Let $d-1$ the dimension of $\Delta$ and let $f_i$ be the number of faces of $\Delta$ of dimension $i$ with the convention that $f_{-1}=1$. Then the $f$-vector of $\Delta$ is the $(d+1)$-tuple $f(\Delta)=(f_{-1},f_0,\ldots,f_{d-1})$. The $h$-vector of $\Delta$ is $h(\Delta)=(h_0,h_1,\ldots,h_d)$ with
\begin{equation} \label{hve}
 h_k=\sum_{i=0}^{k}(-1)^{k-i}\binom{d-i}{k-i} f_{i-1}. 
\end{equation}
The sum
\[
 \widetilde{\chi}(\Delta)=\sum_{i=-1}^{d-1}(-1)^{i}f_i
\]
is called the \emph{reduced Euler characteristic} of $\Delta$ and $h_d=(-1)^{d-1}\widetilde{\chi}(\Delta)$.
For any $F \in \Delta$  we define $\link_{\Delta}(F)=\{ G \in \Delta : \ F \cap G= \varnothing \ \mbox{ and } F \cup G \in \Delta \}, \ \del_{\Delta}(F)=\{G \in \Delta : F\cap G =\varnothing \}.$\\
We define the \emph{chain complex} as follows:
\[
\mathcal{C} \ : \ 0 \rightarrow K^{f_{d-1}} \overset{\partial_{d-1}}{\longrightarrow} K^{f_{d-2}} \overset{\partial_{d-2}}{\longrightarrow} \ldots \overset{\partial_{0} }{\longrightarrow} K \rightarrow 0
\]
and by definition the $i-$th \emph{reduced homology group} $\widetilde{H}_{i}(\Delta;K)$ is
\[
\widetilde{H}_{i}(\Delta;K)= \ker (\partial_{i})/ \mbox{im} ({\partial_{i+1}}).
\]
We set $b_{i}=\dim \widetilde{H}_{i}(\Delta;K)$ and we point out that 
\begin{equation}\label{b-1}
b_{-1} = 1 \Leftrightarrow \Delta= \{\varnothing\};
\end{equation}
\begin{equation}\label{b0}
b_{0}= c-1
\end{equation}
where $c$ is the number of distinct components of $\Delta$.
As in \cite[Chapter 7]{Hi}, the reduced Euler characteristic $\widetilde{\chi}(\Delta)$ can be seen as
\begin{equation}\label{homchi}
\widetilde{\chi}(\Delta)=\sum\limits_{i = -1}^{d-1} (-1)^{i} b_{i}= (-1)^{d-1} h_{d}.
\end{equation}

\noindent Given any simplicial complex $\Delta$ on $V$, we can associate a monomial ideal $I_\Delta$ in the polynomial ring $R$ as follows:
\[
 I_\Delta=(\{x_{j_1}x_{j_2}\cdots x_{j_r}: \{x_{j_1},x_{j_2},\ldots,x_{j_r}\}\notin \Delta\}).
\]
$R/I_\Delta$ is called Stanley-Reisner ring and its Krull dimension is $d$. If $G$ is a graph we call the \textit{independence complex} of $G$ by
\[
\Delta(G)=\{A\subset V(G): A \mbox{ is an independent set of }G\}. 
\]

The \textit{clique complex} of a graph $G$ is the simplicial complex whose faces are the cliques of $G$.

\noindent Let $T=\{1,2,\ldots,\left \lfloor\frac{n}{2}\right \rfloor\}$ and $G$ be a circulant graph on $S \subseteq T$. We observe that $\ol{G}$ is a circulant graph on $\ol{S}=T\setminus S$ and the clique complex of $\ol{G}$ is the independence complex of $G$, $\Delta(G)$. So from now on we will take $\Delta$ as the clique complex of the graph $\ol{G}=C_{n}(\ol{S})$.

Let $\Delta$ be a pure independence complex of a graph $G$.
We say that $\Delta$ is \emph{vertex decomposable} if one of the following conditions hold:
(1) $n=0$ and $\Delta=\{\varnothing\}$; (2)  $\Delta$ has a unique maximal facet $\{x_{0},\ldots,x_{n-1} \}$;  (3) There exists $x \in V(G)$ such that both $\link_{\Delta}(x)$ and $\del_{\Delta}(x)$ are vertex decomposable and the facets of $\del_{\Delta}(x)$ are also facets in $\Delta$.\\
We say that $\Delta$ is \emph{Cohen-Macaulay} if for any $F \in \Delta$ we have that \\$\dim_K \widetilde{H}_{i}(\link_{\Delta}(F),K)=0$ for any $i < \dim \link_{\Delta}(F)$.		\\
We say that $\Delta$ is \emph{Buchsbaum} if $\forall \{x\} \in \Delta$ we have that $\link_{\Delta}(x)$ is Cohen-Macaulay.

It is well known that 
\[
\Delta \ \mbox{Vertex Decomposable} \Rightarrow \] \[ \Rightarrow \Delta \ \mbox{Cohen-Macaulay} \ \Rightarrow \ \Delta \  \mbox{Buchsbaum} \Rightarrow \Delta \ \mbox{Pure}.
\]
\begin{Remark}\label{isove}
Let $\Delta$ be a $0$-dimensional simplicial complex on $n$ vertices. Then $\Delta$ is vertex decomposable.
%A simplicial complex $\Delta$ formed by the empty set and $n$ isolated vertices, namely 
%\[
%\Delta=\{\varnothing,\{x_1\}, \{x_2\}, \ldots ,\{x_n\}\}
%\] 
%is always vertex decomposable, because for any $i\in \{1,2,\ldots ,n\}$ we have $\link_\Delta(x_i)=\del_\Delta(x_i)=\{\varnothing \}.$
\end{Remark}

\begin{Lemma}\label{1dim}
Let $\Delta$ be a $1$-dimensional simplicial complex on $n$ vertices. Then the following are equivalent
\begin{itemize}
\item[(i)]  $\Delta$ is vertex decomposable;
\item[(ii)] $\Delta$ is connected.
\end{itemize}
\end{Lemma}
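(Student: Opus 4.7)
My plan is to prove the two implications separately. For (i) $\Rightarrow$ (ii) I will invoke the chain VD $\Rightarrow$ CM recalled just before Remark \ref{isove}. For (ii) $\Rightarrow$ (i) I will argue by induction on $n$, producing a suitable shedding vertex via a spanning tree.

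For (i) $\Rightarrow$ (ii): since $\Delta$ is vertex decomposable it is Cohen--Macaulay. Applying the Cohen--Macaulay condition at $F=\varnothing$, so that $\link_\Delta(\varnothing)=\Delta$ has dimension $1$, we get $\widetilde{H}_0(\Delta;K)=0$, i.e.\ $b_0=0$. By (\ref{b0}) this is equivalent to $\Delta$ having a single connected component, so $\Delta$ is connected.

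For (ii) $\Rightarrow$ (i): a connected $1$-dimensional simplicial complex is automatically pure, so $\Delta$ is the simplicial complex associated with a connected graph $H$ on $V=\{x_0,\ldots,x_{n-1}\}$ having no isolated vertices. I induct on $n$. The base case $n=2$ is the simplex on two vertices, which is vertex decomposable by condition (2) of the definition. For $n\geq 3$, let $T$ be a spanning tree of $H$ and let $x$ be a leaf of $T$. Then $\link_\Delta(x)$ is a $0$-dimensional complex on the set of neighbors of $x$ in $H$, hence vertex decomposable by Remark \ref{isove}. The deletion $\del_\Delta(x)$ contains $T-x$ as a spanning tree of $V\setminus\{x\}$, so it is connected and has no isolated vertices; by the induction hypothesis it is vertex decomposable. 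Finally, every facet of $\del_\Delta(x)$ is an edge of $H$ not containing $x$, hence a facet of $\Delta$, so condition (3) of the definition yields that $\Delta$ is vertex decomposable.

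The main subtlety I expect is precisely the choice of the shedding vertex $x$ in the inductive step. An arbitrary vertex $x$ may create a new facet $\{y\}$ in $\del_\Delta(x)$ whenever $y$ had $x$ as its unique neighbor in $H$, violating the shedding condition; it may also disconnect $\del_\Delta(x)$ so that the inductive hypothesis no longer applies. Picking $x$ as a leaf of a spanning tree $T$ handles both issues at once, since $T-x$ remains a spanning tree of $V\setminus\{x\}$, so every remaining vertex still has a neighbor and the deletion stays connected.
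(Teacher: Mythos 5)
Your proof is correct. Note that the paper itself offers no proof of Lemma \ref{1dim} (it is stated as a known fact), so there is nothing to compare it against; your argument is the standard one and it meshes cleanly with the paper's definitions. The forward direction via the chain vertex decomposable $\Rightarrow$ Cohen--Macaulay, applied at $F=\varnothing$ together with \eqref{b0}, is exactly right, and in the converse direction you correctly identify and handle the only delicate point: an arbitrary vertex may leave an isolated vertex (hence a non-facet $\{y\}$ becoming a facet of the deletion) or disconnect the deletion, whereas a leaf $x$ of a spanning tree guarantees that $\link_\Delta(x)$ is $0$-dimensional (vertex decomposable by Remark \ref{isove}), that $\del_\Delta(x)$ is again connected, pure and $1$-dimensional (so the induction hypothesis applies and its facets are edges of $\Delta$, hence facets of $\Delta$), so condition (3) of the definition applies. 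The only cosmetic remark is that you should observe $\Delta$ is nonempty when invoking the Cohen--Macaulay condition at $F=\varnothing$, and that purity of a connected $1$-dimensional complex (which you do point out) is what places the lemma within the scope of the paper's definition of vertex decomposability, which is stated for pure complexes.
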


Let $\mathbb{F}$ be the minimal free resolution of $R/I(G)$. Then 
\[
\mathbb{F} \ : \ 0 \rightarrow F_{p} \rightarrow F_{p-1} \rightarrow \ldots \rightarrow F_{0} \rightarrow R/I(G)\rightarrow 0
\]
where $F_{i}=\bigoplus\limits_j R(-j)^{\beta_{i,j}}$. The $\beta_{i,j}$ are called the \emph{Betti numbers} of $\mathbb{F}$.
For any $i$, $\beta_{i}=\sum_{j} \beta_{i,j}$ is called the $i$-th \emph{total Betti number}.
The \emph{Castelnuovo-Mumford regularity} of $R/I(G)$, denoted by $\mbox{reg}\  R/I(G)$ is defined as 
\[
\mbox{reg} \ R/I(G)=\max\{j-i: \beta_{i,j} \}.
\]
Let $\sigma \subseteq V= \{x_{0},\ldots ,\ x_{n-1} \}$. We define the \textit{restriction} of the simplicial complex $\Delta$ to $\sigma$ as
\[
\Delta |_{\sigma}= \{F \in \Delta \ | \ F \subseteq \sigma  \}.
\]

\begin{Theorem}[Hochster's formula, \cite{MS}]\label{hoch}
The non-zero Betti numbers of $R/I_{\Delta}$ lie in the squarefree degree $j$, and we have
\[
\beta_{i,j}(R/I_{\Delta})=\sum_{|\sigma|=j, \ \sigma\subseteq V}\dim_{K} \widetilde{H}_{j-i-1}(\Delta|_{\sigma};K).
\]
\end{Theorem}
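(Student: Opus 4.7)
The plan is to compute $\beta_{i,j}(R/I_\Delta) = \dim_K \Tor_i^R(R/I_\Delta, K)_j$ via the Koszul complex on $x_1, \ldots, x_n$ and exploit the fine $\ZZ^n$-grading that $R/I_\Delta$ inherits from $R$. First I would resolve $K$ by the Koszul complex $\mathbf{K}_\bullet$, giving $\Tor_i^R(R/I_\Delta, K) \iso H_i(\mathbf{K}_\bullet \otimes_R R/I_\Delta)$; both sides are $\ZZ^n$-graded, so it suffices to compute each strand $H_i(\mathbf{K}_\bullet \otimes R/I_\Delta)_\mathbf{a}$ for $\mathbf{a} \in \NN^n$ and then sum over $\mathbf{a}$ with $|\mathbf{a}| = j$. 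Writing $\mathbf{K}_i = \bigoplus_{|\tau|=i} R \cdot e_\tau$, the $\mathbf{a}$-graded piece of $\mathbf{K}_i \otimes R/I_\Delta$ has a $K$-basis indexed by those subsets $\tau \subseteq \supp \mathbf{a}$ for which $\supp(\mathbf{a}) \setminus \tau \in \Delta$ (so that $x^{\mathbf{a} - \mathbf{e}_\tau}$ survives modulo $I_\Delta$).

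The main body of the argument handles the squarefree strands. When $\mathbf{a}$ is squarefree with $\sigma := \supp \mathbf{a}$ of cardinality $j$, the basis above is in natural bijection with the $(j-i-1)$-dimensional faces of $\Delta|_\sigma$ via $\tau \mapsto \sigma \setminus \tau$. A short check matches the Koszul differential, up to sign, with the reduced simplicial boundary of $\Delta|_\sigma$, giving
\[
\Tor_i^R(R/I_\Delta, K)_\mathbf{a} \iso \widetilde{H}_{j-i-1}(\Delta|_\sigma; K).
\]
Summing over the squarefree multi-degrees $\mathbf{a}$ of total degree $j$ --- one for each $\sigma \subseteq V$ with $|\sigma| = j$ --- yields the right-hand side of the claimed formula.

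It remains to show that no non-squarefree multi-degree contributes, which simultaneously establishes the first assertion of the theorem (Betti numbers concentrated in squarefree degrees). If some coordinate satisfies $a_\ell \geq 2$, I would construct an explicit null-homotopy on the strand $(\mathbf{K}_\bullet \otimes R/I_\Delta)_\mathbf{a}$ of Koszul type: roughly, send $e_\tau \mapsto \pm e_{\tau \cup \{\ell\}}$ when $\ell \notin \tau$ and $0$ otherwise, where the hypothesis $a_\ell \geq 2$ ensures the monomial factor persists in $R/I_\Delta$ even after the implicit ``division by $x_\ell$''. The main technical obstacle is the sign bookkeeping --- both identifying the Koszul boundary with the simplicial one on $\Delta|_\sigma$ and verifying the contracting homotopy in the non-squarefree case --- but once the indexing conventions are fixed, the rest is routine.
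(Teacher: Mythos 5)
The paper does not prove this statement at all: it is quoted as Hochster's formula with a citation to Miller--Sturmfels [MS], so there is no internal proof to compare against. Your sketch is the standard direct proof via the fine grading on the Koszul complex (the argument one finds, e.g., in Bruns--Herzog), whereas the cited source actually proceeds differently, through upper Koszul simplicial complexes and Alexander duality. Your route is perfectly legitimate and arguably more self-contained: computing $\Tor_i^R(R/I_\Delta,K)_{\mathbf a}$ strand by strand, identifying the squarefree strands with (co)chain complexes of the restrictions $\Delta|_\sigma$, and killing the non-squarefree strands by a contracting homotopy is exactly how the result is usually established from scratch.

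Two small points should be tightened. First, in degree $\mathbf a = \sigma$ squarefree, under the bijection $\tau\mapsto\sigma\setminus\tau$ the Koszul differential \emph{adds} a vertex to the face $\sigma\setminus\tau$, so the strand is the augmented cochain complex of $\Delta|_\sigma$ and the natural identification is $\Tor_i^R(R/I_\Delta,K)_\sigma\iso \widetilde{H}^{\,j-i-1}(\Delta|_\sigma;K)$, not with the boundary map as you wrote; since $K$ is a field, $\dim_K\widetilde{H}^{\,j-i-1}=\dim_K\widetilde{H}_{j-i-1}$, so the stated formula follows, but the cohomology/homology switch should be said explicitly. Second, your description of the $K$-basis of $(\mathbf{K}_i\otimes R/I_\Delta)_{\mathbf a}$ is only correct for squarefree $\mathbf a$: in general the condition is $\supp(\mathbf a-\mathbf{e}_\tau)\in\Delta$, which differs from $\supp(\mathbf a)\setminus\tau\in\Delta$ precisely when $a_\ell\ge 2$ for some $\ell\in\tau$. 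This corrected description is what makes your homotopy work: when $a_\ell\ge 2$, the support of $\mathbf a-\mathbf{e}_\tau$ is unchanged by further subtracting $\mathbf{e}_\ell$, so $h(e_\tau x^{\mathbf a-\mathbf{e}_\tau})=\pm e_{\tau\cup\{\ell\}}x^{\mathbf a-\mathbf{e}_{\tau\cup\{\ell\}}}$ lands on a genuine basis element, and the terms of $\partial h+h\partial$ that vanish in $R/I_\Delta$ vanish in matching pairs, yielding $\partial h+h\partial=\pm\,\mathrm{id}$ on the strand. With these adjustments the argument is complete and correct.
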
 

If $R/I(G)$ is Cohen-Macaulay, then the last total Betti number $\beta_p$ is the \textit{Cohen-Macaulay type}  of $R/I(G)$. 
Moreover, if the Cohen-Macaulay type is $\beta_{p,p+\reg R/I(G)}$, then $R/I(G)$ is called a \emph{level algebra}. When the Cohen-Macaulay type is equal to 1, we say that $R/I(G)$ is a \textit{Gorenstein algebra}.

\begin{Remark}\label{leval}
Let $G$ be a Cohen-Macaulay graph with independence complex $\Delta$ such that $\widetilde{\chi}(\Delta)\neq 0$ and Cohen Macaulay type $s$. Then $R/I(G)$ is a level algebra if and only if $s=|\widetilde{\chi}(\Delta)|$. 
\end{Remark}
\begin{proof}
We have to compute $\beta_{p,p+r}$, where $p=\pd R/I(G)$ and $r=\reg R/I(G)$. Let $d$ be the Krull dimension of $R/I(G)$. Since $R/I(G)$ is Cohen-Macaulay and from Auslander-Buchsbaum formula, we have that $\pd R/I(G)=n-d$. Moreover, since $\widetilde{\chi}(\Delta)\neq 0$ from \cite[Remark 1.2]{RR} and \cite[Corollary 4.8]{Ei}, $\reg R/I(G)=\depth R/I(G)= d$.
Hence, 
\[
\beta_{n-d,n}\overset{(*)}{=}\dim \widetilde{H}_{d-1}(\Delta;K)\overset{(**)}{=}|\widetilde{\chi}(\Delta)|,
\]
where $(*)$ follows from Theorem \ref{hoch} and $(**)$ from the fact that $\Delta$ is Cohen-Macaulay and from Equation \eqref{homchi}. Then the assertion follows.
\end{proof}

We now give an example of circulant graph whose independence complex is vertex decomposable.
\begin{Example}\label{P17}
Let us consider the circulant graph $G=C_{17}(\ol{1,2,4,8})$, that is the the Paley 17. (see \cite{Bo}). In Figure \ref{pal} we represent $\link_\Delta (x)$ for an $x \in V(G)$.
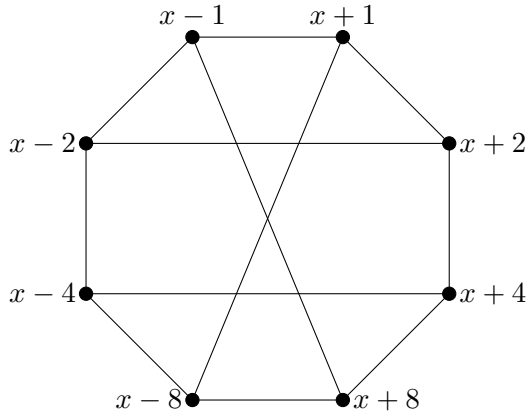
\begin{figure}[h]
\begin{center}
\begin{tikzpicture}
\draw (-1,1) -- (1,1);
\draw (-2.4142135,-2.4142135) --(2.4142135,-2.4142135);
\draw (1,1) -- (2.4142135,-0.4142135);
\draw (2.4142135,-2.4142135) -- (2.4142135,-0.4142135);
\draw (-2.4142135,-0.4142135) -- (2.4142135,-0.4142135);
\draw (-2.4142135,-0.4142135)--(-2.4142135,-2.4142135);
\draw (-2.4142135,-0.4142135)--(-1,1);
\draw (-1,-3.8284271)--(-2.4142135,-2.4142135);
\draw (2.4142135,-2.4142135) -- (1,-3.8284271);
\draw (-1,-3.8284271) -- (1,-3.8284271);
\draw (1,-3.8284271) -- (-1,1);
\draw (-1,-3.8284271) -- (1,1);
%Disegna la circonferenza%
%\draw (0,-1.4142135) circle (73 pt);
\filldraw (-1,-3.8284271) circle (2.5 pt) node [anchor=east] {$x-8$};
\filldraw (1,-3.8284271) circle (2.5 pt) node [anchor=west] {$x+8$};
\filldraw (-2.4142135,-0.4142135) circle (2.5 pt) node [anchor=east] {$x-2$};
\filldraw (2.4142135,-0.4142135) circle (2.5 pt) node [anchor=west] {$x+2$};
\filldraw (1,1) circle (2.5 pt) node [anchor=south] {$x+1$};
\filldraw (-1,1) circle (2.5 pt) node [anchor=south] {$x-1$};
\filldraw (2.4142135,-2.4142135) circle (2.5 pt) node [anchor=west] {$x+4$};
\filldraw (-2.4142135,-2.4142135)circle (2.5 pt) node [anchor=east] {$x-4$};
\end{tikzpicture}\caption{$\link_{\Delta}(x)$ for any $x\in V(C_{17}(\ol{1,2,4,8}))$}\label{pal}
\end{center}
\end{figure}

This graph is the first example of Cohen-Macaulay circulant graph whose Stanley-Reisner ring has Krull dimension 3 and it is not a tensor product of rings having Krull dimension 1. Moreover, it is a level algebra.
\end{Example}

\section{2-dimensional well-covered independence complexes}\label{sec:welc}
We start by providing a description of well-covered graphs of Krull dimension $3$ in terms of the elements in $S$. 
\begin{Proposition}\label{welco}
Let $G=C_{n}(S)$ be a non-complete circulant graph. Then $\Delta$ is a pure simplicial complex of $\dim \Delta=2$ if and only if for any $a\in \ol{S}$ %one and only one of 
the following conditions hold
\begin{enumerate}
\item There exists $|b|_n\in\ol{S}$ such that $|b-a|_n \in \ol{S}$;
\item For any $|b|_n,|c|_n\in\ol{S}$ with $b\neq c$ and such that $|c-a|_n,|b-a|_n \in \ol{S}$ then $|b-c|_n \notin \ol{S}$. 
\end{enumerate}
\end{Proposition}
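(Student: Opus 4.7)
The plan is to exploit the fact (stated just above the proposition) that $\Delta$ is the clique complex of $\overline{G}=C_n(\overline{S})$, together with the vertex-transitivity of circulant graphs. Under this identification, $\Delta$ being pure of dimension $2$ is equivalent to saying: every maximal clique of $\overline{G}$ is a triangle. I will split this into three assertions and then match them with $(1)$ and $(2)$. Namely, I claim $\Delta$ is pure of dimension $2$ if and only if (a) $\overline{G}$ contains a triangle, (b) every edge of $\overline{G}$ lies in some triangle (equivalently no edge is a maximal clique), and (c) $\overline{G}$ contains no $4$-clique. Because $\overline{G}$ is vertex-transitive, each of these statements can be checked by considering only the edges incident to the vertex $0$, i.e.\ in terms of elements of $\overline{S}$.

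For the forward direction, assume $\Delta$ is pure with $\dim\Delta=2$. Given $a\in\overline{S}$, the set $\{0,a\}$ is a $1$-face of $\Delta$ because $|a|_n=|a|_n\in\overline{S}$, so by purity it is contained in a $2$-face $\{0,a,b\}$. The edges of this triangle in $\overline{G}$ give $|b|_n\in\overline{S}$ and $|b-a|_n\in\overline{S}$, proving $(1)$. For $(2)$, suppose $|b|_n,|c|_n,|b-a|_n,|c-a|_n\in\overline{S}$ with $b\neq c$ (and necessarily $b,c\neq 0,a$); if also $|b-c|_n\in\overline{S}$, then all six pairwise circular distances in $\{0,a,b,c\}$ lie in $\overline{S}$, so $\{0,a,b,c\}$ is a $4$-clique of $\overline{G}$ and thus a $3$-face of $\Delta$, contradicting $\dim\Delta=2$.

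For the converse, assume $(1)$ and $(2)$. Since $G$ is non-complete, $\overline{S}\neq\varnothing$, pick any $a\in\overline{S}$; by $(1)$ the edge $\{0,a\}$ extends to a triangle in $\overline{G}$, so $\dim\Delta\geq 2$. To show $\dim\Delta\leq 2$, suppose some $4$-subset $F=\{v_0,v_1,v_2,v_3\}$ were a face of $\Delta$; translating by $-v_0$ (which is a graph automorphism of $\overline{G}$) we may assume $v_0=0$, and then setting $a=v_1$, $b=v_2$, $c=v_3$ yields a direct violation of $(2)$. Hence $\dim\Delta=2$. It remains to check purity: any facet has dimension $\geq 1$ because each vertex $v$ is contained in the edge $\{v,v+a\}$ of $\overline{G}$, and any facet of dimension $1$ would be an edge $\{u,w\}$ of $\overline{G}$ not contained in a triangle, but translating by $-u$ reduces this to the edge $\{0,w-u\}$ with $|w-u|_n\in\overline{S}$, contradicting $(1)$.

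The argument is essentially a dictionary translation; the only delicate point is remembering to use vertex-transitivity to reduce each local condition at an arbitrary edge/vertex of $\overline{G}$ to a condition at $0$, so that the differences $|b|_n,|b-a|_n$ take values in $\overline{S}$ as required. No deeper obstacle is expected.
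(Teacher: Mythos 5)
Your proof is correct and follows essentially the same route as the paper: translating the conditions on $\ol{S}$ into statements about triangles and $4$-cliques of $\ol{G}$ through vertex $0$, using vertex-transitivity to reduce arbitrary faces to ones containing $0$. The only cosmetic difference is that you argue the forward direction directly where the paper argues by contraposition, and you spell out the (trivial) check that $b,c\notin\{0,a\}$, which the paper leaves implicit.
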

\begin{proof}
$\Rightarrow)$ 
Let $a$ be such that $|a|_n \in \ol{S}$. Suppose by contraposition that at least one of the conditions (1) or (2) does not hold. Firstly, let us suppose that there does not exist $b \in V(G)$ such that $|b|_n, |b-a|_n \in \ol{S}$, then the edge $\{0,a\}$ is a facet of dimension $1$ of $\Delta$. It contradicts the assumption. Secondly, if there exist $ b,c \in V(G)$ such that $|b|_n,|c|_n,|c-a|_n,|b-a|_n \in \ol{S}$ and $|b-c|_n \in \ol{S}$ then $\{0,a,b,c\}$ is a facet of dimension $3$ of $\Delta$. It contradicts the assumption. \\
$\Leftarrow)$ 
We start by proving that $\Delta$ is $2$-dimensional. From $(1)$ it follows that $\{0,a,b\} \in \Delta$, namely $\dim \Delta \geq 2$. Now we prove $\dim \Delta \leq 2$.
By contraposition let $\dim \Delta >2$, then there exists a facet of dimension $3$, namely $\{0,a,b,c\}$ so that 
\[
|a|_n,|b|_n,|c|_{n}, |c-a|_n, |b-a|_n, |b-c|_n \in \ol{S}
\]
that contradicts (2).\\ Now we prove that $\Delta$ is pure. By contraposition, assume $\Delta$ is $2$-dimensional but not pure. Since $G$ is not complete, $\bar{S}$ is not empty, namely there are no isolated vertices. Then there exists $a\in V(G)$ such that $\{0,a\}$ is a facet of $\Delta$, and in particular $|a|_n \in \bar{S}$. It contradicts the assumption (1).
\end{proof}
Now we prove some properties on the $f$-vector and the $h$-vector of $2$-dimensional independence complexes of circulants.

\begin{Proposition}\label{2face}.
Let $G=C_{n}(S)$ be such that $\dim \Delta=2$ and
\[
\mathcal{F}_{0}=\Big\{\{0,a,b\} \subset V : |a|_n ,|b|_n, |b-a|_n\in \bar{S}  \Big\},
\]
the set of the 2-dimensional facets of $\Delta$ containing the vertex $0$ and let
\[
\FF_0=\TT\sqcup \TT_e
\]
where 
\[
\TT_e=\Big\{\{0,a,b\} \subset V : |a|_n =|b|_n=|b-a|_n\in \bar{S}  \Big\}.
\]
Then $|\TT|=3t$, for some $t \in \mathbb{N}$ and
\[
|\TT_{e}|=\begin{cases}1& \mbox{if } n=3k \mbox{ with } k \in \ol{S}\\ 0 &\mbox{otherwise.}\end{cases}
\]
\end{Proposition}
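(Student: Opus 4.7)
My plan is to partition $\FF_0$ using the $\ZZ_n$-translation action on the vertex set, then count each part.

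For any $F \in \FF_0$ and $c \in \ZZ_n$, the translate $F + c$ is again a $2$-face of $\Delta$, since the non-edges of $\ol G$ depend only on circular differences and hence are invariant under $\ZZ_n$. I would introduce the equivalence relation on $\FF_0$ given by $F \sim F'$ iff $F' = F + c$ for some $c$. Because $0 \in F + c$ iff $c \in -F$, the equivalence class of $F = \{0, a, b\}$ intersected with $\FF_0$ is exactly $\{F, F - a, F - b\}$, and so has cardinality at most $3$.

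Next, I would characterize when the class has fewer than $3$ elements. Suppose, without loss of generality, $F = F - a$; then the unordered set equality $\{-a, b - a\} = \{a, b\}$ must hold. The case $-a = a$ together with $b - a = b$ forces $a = 0$ and is excluded, leaving $b = -a$ and $b - a = a$, that is $b = -a = 2a$ and $3a \equiv 0 \pmod n$. In this situation, a direct substitution using $-a = 2a$ shows $F - 0 = F - a = F - b = F$, so the class has size exactly $1$. All three pairwise differences of $F = \{0, a, 2a\}$ equal $|a|_n$, so $F \in \TT_e$; conversely, any $F \in \TT_e$ with common distance $k$ must have the form $\{0, k, -k\}$, which again forces $3k \equiv 0 \pmod n$. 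The constraint $1 \leq k \leq \lfloor n/2 \rfloor$ combined with $n \mid 3k$ forces $n = 3k$, so a singleton class exists precisely when $n = 3k$ for some $k \in \bar S$, and the unique such triangle is $\{0, k, 2k\}$.

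The counts then drop out: singleton classes contribute $|\TT_e| \in \{0, 1\}$ according to the stated dichotomy, while every other equivalence class has size $3$ and lies entirely inside $\TT$, yielding $|\TT| = 3t$ for some $t \in \NN$. The only nontrivial step is the brief case analysis showing that a nontrivial translational symmetry of $F$ forces $F = \{0, n/3, 2n/3\}$; everything else is routine bookkeeping on the $\ZZ_n$-action.
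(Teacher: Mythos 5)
Your proposal is correct and takes essentially the same route as the paper: both partition $\FF_0$ via the $\ZZ_n$-shift action, noting that the only translates of $F=\{0,a,b\}$ still containing $0$ are $F$, $F-a$, $F-b$, so that classes have size $3$ (giving $3\mid|\TT|$) except for the unique degenerate triangle $\{0,n/3,2n/3\}$, which accounts for $|\TT_e|$. If anything, your case analysis of when a class collapses (and of the resulting formula for $|\TT_e|$) is spelled out more fully than in the paper's own proof.
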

\begin{proof}
For any $F=\{0,a,b\} $ in $\mathcal{T}$ by shifting the elements of $F$ by $a$ and $b$, we obtain the sets $F(-a)=\{-a,0,b-a\},\ F(-b)=\{-b,a-b,0\}$ that are distinct and belong to $\TT$. These are the only shifts sending $F\in \FF_0$ in $F'\in \FF_0$. Hence
\[
3 \ | \ |\mathcal{T}|.
\]
By similar argument if $F\in \TT_e$ through the shifts we obtain $F$ itself.
\end{proof}

\begin{Remark}\label{no2k}
We highlight that there is no circulant $C_{2k}(S)$ such that $k \in \ol{S}$ with pure and 2-dimensional $\Delta$. In fact by contraposition let us assume such a $G$ exists. Since $\Delta$ is pure, there exists at least an $a$ such that $\{0,a,k\}$ is a $2$-face of $\Delta$, with $|k-a|_{n} \in \ol{S}$. If we set $a=a$, $b=k$ and $c=a+k$ we have that $|b|_{n}, |c|_{n}=|k-a|_{n}, |c-a|_{n}=|k|_{n}, |b-a|_{n}, |b-c|_{n} \in \ol{S}$ that contradicts (2) of Proposition \ref{welco}.
\end{Remark}
\begin{Proposition}\label{hvebu}
Let $G=C_{n}(S)$ be such that $\Delta$ is a pure simplicial complex of dimension $2$. Then $h(\Delta)=(1,\ n-3, \ n(s-2)+3, \ h_3)$ with
\begin{equation}\label{h3}
h_3=
\left \{
  \begin{tabular}{ll}
  $-1+n(t-s+1)+ k$ & if  $n=3k$ with $k \in \ol{S}$\\
  $-1+n(t-s+1)$    & otherwise  
  \end{tabular}
\right. 
%h_3=\-1+n(t-s+1)+ k & n=3k \mbox{ with } k \in \ol{S}\\ -1+n(t-s+1) &\mbox{otherwise}\end{cases}
\end{equation}
where $s=|\ol{S}|$ and $t=\frac{1}{3} |\mathcal{T}|$. 
%Moreover, if $\Delta$ is Buchsbaum we have
%\[
%-1 \leq h_3 \leq h_2^{(2)}.
%\]
\end{Proposition}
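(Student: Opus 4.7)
The plan is to compute the $f$-vector $(1, f_0, f_1, f_2)$ directly and then plug into formula \eqref{hve} with $d=3$. Explicitly, \eqref{hve} gives
\[
h_0 = 1,\quad h_1 = f_0 - 3,\quad h_2 = f_1 - 2f_0 + 3,\quad h_3 = f_2 - f_1 + f_0 - 1,
\]
so the entire statement reduces to computing $f_0$, $f_1$ and $f_2$.

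The count $f_0 = n$ is immediate. For $f_1$, I would argue that in $\ol{G} = C_n(\ol{S})$ every vertex has degree exactly $2s$: each $j \in \ol{S}$ contributes the two neighbors $i+j$ and $i-j$, and these coincide only when $j = n/2$. But by Remark \ref{no2k}, if $n$ is even then $n/2 \notin \ol{S}$, since otherwise $\Delta$ would fail to be pure and $2$-dimensional. Hence the handshake lemma yields $f_1 = ns$, which already forces $h_1 = n-3$ and $h_2 = n(s-2)+3$ as claimed.

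The heart of the proof is then the count of $f_2$, and this is exactly where Proposition \ref{2face} is used. By the translation action of $\ZZ_n$ on $\Delta$, the number of $2$-faces containing a fixed vertex is the same for every vertex; for the vertex $0$ this cardinality is $|\FF_0| = |\TT| + |\TT_e| = 3t + |\TT_e|$. Since every $2$-face contains exactly three vertices, double counting gives
\[
f_2 \;=\; \frac{n\,(3t + |\TT_e|)}{3} \;=\; nt \,+\, \frac{n\,|\TT_e|}{3}.
\]
Applying the case split of Proposition \ref{2face}: if $n = 3k$ with $k \in \ol{S}$ then $|\TT_e|=1$ and the extra term is $n/3 = k$; otherwise $|\TT_e|=0$ and there is no correction.

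Finally I would substitute into $h_3 = f_2 - f_1 + n - 1 = f_2 - ns + n - 1$. In the generic case this gives $h_3 = nt - ns + n - 1 = n(t-s+1) - 1$, and in the case $n=3k$, $k\in\ol{S}$ it gives $h_3 = n(t-s+1) + k - 1$, matching \eqref{h3}. The only potential subtlety is the implicit use of Remark \ref{no2k} to rule out a $-1$ correction to the vertex degree in $\ol{G}$; beyond that, the argument is bookkeeping on the $f$-vector via the group action.
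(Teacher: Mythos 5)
Your proposal is correct and follows essentially the same route as the paper: compute $f_0=n$, use Remark \ref{no2k} to rule out $n/2\in\ol{S}$ and get $f_1=ns$, and count $f_2$ via the number of $2$-faces through the vertex $0$ together with the case analysis of Proposition \ref{2face}. The only cosmetic difference is that you re-derive the identity $f_2=\frac{n\,|\FF_0|}{3}$ from vertex-transitivity and double counting, where the paper simply cites Lemma 1 of \cite{Ri}.
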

\begin{proof}
By plugging $k=1,2$ and $d=3$ in formula \eqref{hve}, we obtain
\[
h_{1}=f_{0}-3; \ \ h_{2}=f_{1}-2n+3.
\] 
Since from Remark \ref{no2k} if $n$ is even $\frac{n}{2} \notin \bar{S}$, we have $f_{1}=ns$. Moreover,
\[
h_3=\widetilde{\chi}(\Delta)=-1+f_{0}-f_1+f_{2}=-1+n-ns+f_{2}.
\]
From \cite[Lemma 1]{Ri} we have $f_{2}=\frac{n \cdot  |\mathcal{F}_0| }{3}$, where  $|\mathcal{F}_0|$ is the number of 2-dimensional facets of $\Delta$ containing the vertex $0$. 
By notation of Proposition \ref{2face}, we set
\[
t:= \frac{1}{3} |\mathcal{T}|.
\]
If $n \neq 3k$ or $n=3k$ with $k\notin \bar{S}$ we have $|\mathcal{T}_e|=0$ and $ |\mathcal{F}_0|=3t$.
In the case $n=3k$ and $k \in \ol{S}$ we have $ |\mathcal{F}_0|=3t+1$, that yields $f_{2}=\frac{n (3t+1)}{3}=nt+k$. Hence \eqref{h3} follows. 
\end{proof}

\section{Proof of Theorem \ref{2vd}}\label{sec:proof}
The aim of this section is to prove Theorem \ref{2vd}. We first prove that $\Delta$ is pure and 2-dimensional, computing its $f$-vector.

\begin{Proposition}\label{2wel}
Let $G=C_{n}(\ol{1,2,4,\ldots,2^m,2^{m}-1})$, $m\geq 3$ and $n=3 \cdot 2^m$.
Then $\Delta $ is a pure $2$-dimensional simplicial complex with $f$-vector 
\[
  (1,n,n(m+2),n(m+2)+2^m)
 \]
\end{Proposition}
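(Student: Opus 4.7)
The strategy is two-fold: first, apply Proposition \ref{welco} to establish that $\Delta$ is pure and $2$-dimensional; second, count $2$-faces through the vertex $0$ to obtain the $f$-vector.

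Working in $\ZZ_n$ with $n = 3 \cdot 2^m$, the set of admissible nonzero differences for $\ol{S} = \{1, 2, 4, \ldots, 2^m, 2^m - 1\}$ is $D = \{\pm 2^i : 0 \leq i \leq m\} \cup \{\pm(2^m - 1)\}$, with $|D| = 2(m+2)$ distinct elements of $\ZZ_n$, since every $t \in \ol{S}$ satisfies $t \leq 2^m < n/2 = 3 \cdot 2^{m-1}$. Immediately, $f_0 = n$, and $f_1 = n(m+2)$ since each $t \in \ol{S}$ with $t < n/2$ contributes $n$ distinct edges.

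To invoke Proposition \ref{welco}, I define, for each $a \in D$,
\[V(a) = \{b \in D : b - a \in D,\ b \neq a\},\]
i.e., the third vertices of 2-faces $\{0, a, b\}$. Condition (1) becomes $V(a) \neq \varnothing$, and condition (2) becomes $b - c \notin D$ for all distinct $b, c \in V(a)$. The casework splits into four types: $a = \pm 1$, $a = \pm 2^i$ for $1 \leq i \leq m - 1$, $a = \pm 2^m$, and $a = \pm(2^m - 1)$, with expected sizes $|V(a)| = 4, 3, 4, 2$ respectively, given explicitly by
\begin{align*}
V(1) &= \{2,\ -1,\ 2^m,\ -(2^m - 1)\}, & V(2^i) &= \{2^{i-1},\ 2^{i+1},\ -2^i\},\\
V(2^m) &= \{1,\ 2^{m-1},\ -2^m,\ 2^m - 1\}, & V(2^m - 1) &= \{2^m,\ -1\},
\end{align*}
and $V(-a) = -V(a)$ by symmetry. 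The verification of condition (2) is a direct computation governed by the identities $2^{m-1} + 2^m = n/2 \in S$ (so $n/2 \notin \ol{S}$) and $2^{m+1} \equiv -2^m \pmod n$ (the latter producing the equilateral triangle $\{0, 2^m, 2^{m+1}\} \in \mathcal{T}_e$).

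Once pure $2$-dimensionality is established, \cite[Lemma 1]{Ri} gives $f_2 = n|\mathcal{F}_0|/3$, where $|\mathcal{F}_0|$ is the number of $2$-faces through $0$. Each such face is counted twice in $\sum_{a \in D}|V(a)|$, so
\[|\mathcal{F}_0| = \tfrac{1}{2}\bigl(2 \cdot 4 + 2(m-1) \cdot 3 + 2 \cdot 4 + 2 \cdot 2\bigr) = 3m + 7,\]
whence $f_2 = n(3m+7)/3 = n(m+2) + 2^m$ (using $n/3 = 2^m$), as claimed. The main obstacle is the finite but tedious casework needed both to confirm that the listed $V(a)$ is complete (no other $b \in D$ has $b - a \in D$) and to check that differences of pairs in $V(a)$ avoid $D$. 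The rigidity of $\ol{S}$ — powers of $2$ plus the single outlier $2^m - 1$ — keeps this tractable: sums of two distinct powers of $2$ in $\{1, \ldots, 2^m\}$ are never in $\ol{S}$ for $m \geq 3$, and $2^m - 1$ interacts nontrivially with powers of $2$ only through $(2^m - 1) + 1 = 2^m$.
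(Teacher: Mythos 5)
Your proposal is correct and follows essentially the same route as the paper: it verifies the two conditions of Proposition \ref{welco} by explicitly listing, for each $a$, the common neighbours of $0$ and $a$ (your sets $V(a)$ encode exactly the faces in \eqref{T} together with the equilateral one $\{0,2^m,2^{m+1}\}$), and then computes $f_2$ via \cite[Lemma 1]{Ri}. The only difference is bookkeeping: you obtain $|\mathcal{F}_0|=\tfrac12\sum_a|V(a)|=3m+7$ by double counting, whereas the paper gets $3(m+2)+1$ from the shift-orbit structure of Proposition \ref{2face}; both yield $f_2=n(m+2)+2^m$.
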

\begin{proof}
We prove that $\Delta$ is a pure 2-dimensional simplicial complex, by using Proposition \ref{welco}. For this aim, we describe the $2$-faces of $\Delta$ containing the vertex $0$.
In the notation of Proposition \ref{2face}, $\{0,2^m,2^{m+1}\} \in \mathcal{T}_{e}$ and $\mathcal{T}$ is formed by the elements $F\in\{\{0,a,b\}: a,b \in \bar{S}  \}$ that are 
\begin{equation}\label{T}
\{0,2^{i},2^{i+1}\}_{i=0,\ldots,m-1}, \{0,1,2^{m}\} ,   \{0,2^{m}-1,2^{m}\}, 
%\{0,1,2\},  \{0,2^{i},2^{i+1}\}_{i=1,\ldots,m-2}, \{0,2^{m-1},2^{m}\}, \{0,1,2^{m}\} ,   \{0,2^{m}-1,2^{m}\}, 
\end{equation}
and their shifts $F(-a)=\{-a,0,b-a\},\ F(-b)=\{-b,a-b,0\}$.
Therefore for any $a\ \in \bar{S}$, $\{0,a\}$ is not a facet of $\Delta$,  condition $(1)$ of Proposition \ref{welco}. %We are left with verifying condition $(2)$.\\
%We first claim that there are no $\{0,a,b,c\}$ with $a,b,c \in V_{0}=\{ 1, \ldots , \lfloor\frac{n}{2} \rfloor\} \cap \bar{S}$ (respectively $V_{0}'=\{ -\lfloor\frac{n}{2} \rfloor,  -\lfloor\frac{n}{2} \rfloor + 1, \ldots , -1\} \cap \{-s \ : \ s \in \bar{S} \}$).

To verify condition $(2)$, we claim that there are no faces $\{0,a,b,c\}\in \Delta$. To prove this claim we distinguish two cases:
\begin{enumerate}
 \item[(C1)] $a,b,c \in \bar{S}$;
 \item[(C2)] $a,b \in \bar{S}$, with $a<b$ and $c \in \{-s \ : \ s \in \bar{S} \}$.
\end{enumerate}
By symmetry the other cases follow.

\noindent

(C1) We need to verify  that for all $\{0,a,b\}$ and $\{0,a,c\}$ in \eqref{T} we have $|b-c|\notin \bar{S}$. For any $i \in \{1,2,\ldots, m-2\}$  we have $|2^{i+2}-2^i|_n= 2^{i+2}-2^i \notin \bar{S}$, then $\{0,2^i,2^{i+1},2^{i+2}\}$ is not a $3$-face of $\Delta$. Furthermore, $|2^{m}-2|=2^m-2\notin \bar{S}$ because $m\geq 3$, that is $\{0,1,2^{m}-1,2^{m}\}$, $\{0,1,2,2^{m}\}$ are not $3$-faces of $\Delta$. By similar arguments, the remaining cases follow.\\
% To get the goal, we just look at $a,b \in V_{0}$ with $a<b$ and $c \in V_{0}'\cap \{-s \ : \ s \in \bar{S} \}$, and we prove that $|b-c|\notin \bar{S}$ (by symmetry the case $a \in V_{0}$ and $b,c \in V_{0}'$ follows ).\\
%To get the goal, we look at $a,b \in \bar{S}$ with $a<b$ and $c \in \{-s \ : \ s \in \bar{S} \}$, and we prove that $|b-c|\notin \bar{S}$. By symmetry the other cases follow.
%Let $a=1$. The possible $\{b,c\}$ are $\{2,-1\},\{-2^{m}+1,2^m\}$, that are not in $\Delta$.\\
(C2) The strategy is the following. We consider the vertices that are adjacent to both  $0$ and $a$ and prove that within this set each pairs of candidates satisfying (C2) are not in $\Delta$.

Let $a=1$. We observe that the vertices adjacent to $0$ and $1$ are $\{2,-1,2^m,1-2^m\}$. The candidates  $\{b,c\}$ are \[\{2,-1\},\{2,-2^{m}+1\},\{2^m, -1\},\{2^m ,-2^{m}+1\}.\] It is straightforward to see that the above pairs are not in $\Delta$.

Let $a=2^{i}$ with $1\leq i \leq m-1$. Then the only candidate $\{b,c\}$ is $\{2^{i+1},-2^{i}\}$. The latter is not in $\Delta$.

%Let $a=2^{m}-1$, then the only candidate is $\{2^m,-1\}$, because  $\forall s$ with $ |s|_n \in \bar{S}$ and $|s|_n >1 $, $|s-a|\notin \bar{S}$. The candidate $\{2^m,-1\}$ is not in $\Delta$.

Let $a=2^{m}-1$, then $b=2^m$ and the only candidate for $c \in \{-s \ : \ s \in \bar{S} \}$ is $-$1. But $\{2^m,-1\}$ is not in $\Delta$.

%% Questo ultimo caso e' inutile dato che a<b in S. Quindi b dove lo piglio?
%Let $a=2^{m}$. Since the only possible $c$ adjacent to $a$ is $-2^{m}$ with $|c-a|=2^m=\max \bar{S}$, then any pair $\{b,-2^m\}$ is such that $|b+2^{m}|\notin \bar{S}$.

Hence we have $\Delta$ is pure and $2$-dimensional. 
%We observe that $\link_{\Delta}(0)$ contains 2 cycles with one extra edge attached, namely 
%\[
%\{ 1, 2, 4,\ldots, 2^m\} \cup \{2^m-1,2^m\}, \ \ \{ n-1, n-2, n-4,\ldots, n-2^m\} \cup \]\[\cup \{n-2^m+1,n-2^m\}.
%\]
%The two sets above are connected (e.g. by the edge $\{2,n-2\}$). 
For what matters the $f$-vector of $\Delta$, we have that $f_{1}=ns$ where $s=|\bar{S}|$, hence $s=m+2$. According to Proposition \ref{2face} and \cite[Lemma 1]{Ri} we have $f_{2}=\frac{n \cdot |\mathcal{F}_0|}{3}$. The elements of  $\mathcal{F}_{0}$ are  the $2$-faces  $F$ in \eqref{T} and the shifted ones plus the one of $\mathcal{T}_e$. Hence, they are $3(m+2)+1$, and $f_{2}=\frac{3n(m+2)+n}{3}=n(m+2)+2^m$.
\end{proof}

We present a characterization of vertex decomposability for $2$-dimensional simplicial complexes useful for our aim.

%\begin{proof}
%(i) $\Rightarrow$ (ii) First of all, we point out that for any $x$, $\link_{\Delta}(x)$ is a set of isolated vertices, hence $\link_{\Delta}(x)$ is vertex decomposable from Remark \ref{isove}. Unless $\del_{\Delta}(x)$ is connected, the facets of $\del_{\Delta}(x)$ are also facets of $\Delta$.
%We observe that a spanning tree of $\Delta$ induces a natural elimination order given by the leaves. This order gives us the vertex decomposability.
%(ii) $\Rightarrow$ (i) Since $\Delta$ is vertex decomposable, then $\Delta$ is also Cohen-Macaulay and the assertion follows due to Remark \ref{CMrem}.
%\end{proof}
\begin{Lemma}\label{weldel}
Let $\Delta$ be a $2$-dimensional pure connected simplicial complex on $n$ vertices, let $\mathcal{M}=\{v_1,v_{2},\ldots, v_{n-3}\}$ be a sequence of vertices of $\Delta$, and for $i=1,2,\ldots, n-3$ let
\[
\Delta_{i-1}=\begin{cases} \Delta &\mbox{if } i=1 \\ \del_{\Delta_{i-2}}(v_{i-1}) &\mbox{otherwise}. \end{cases}
\]
Then the following are equivalent:
\begin{itemize}
\item[(i)] $\Delta$ is vertex decomposable with respect to $\mathcal{M}$;
\item[(ii)] $\mathcal{M}$ satisfies the following properties: 
\begin{enumerate}
\item For any $i=1,2,\ldots, n-3$, $\link_{\Delta_{i-1}}(v_i)$ is a connected 1-dimensional simplicial complex;
\item $\Delta_{n-3}$ is the simplex on $3$ vertices.
\end{enumerate}
\end{itemize}
\end{Lemma}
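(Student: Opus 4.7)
The plan is to prove both implications by inductions on $i$ running in opposite directions along $\mathcal{M}$, with the pure $2$-dimensionality of the intermediate complexes $\Delta_i$ as the invariant that ties the recursive definition of vertex decomposability to the combinatorial condition on the links.

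For $(i)\Rightarrow(ii)$, I would induct forward on $i$ with the statement that $\Delta_i$ is pure $2$-dimensional. The base $\Delta_0=\Delta$ is given, and for the step the shedding clause in the definition of vertex decomposability forces every facet of $\Delta_i=\del_{\Delta_{i-1}}(v_i)$ to be a facet of $\Delta_{i-1}$, hence a triangle; combined with purity of $\Delta_i$ (automatic since in this paper vertex-decomposable complexes are pure by definition), this yields $\Delta_i$ pure $2$-dimensional. With purity of $\Delta_{i-1}$ in hand, $\link_{\Delta_{i-1}}(v_i)$ is pure $1$-dimensional: its facets correspond to the triangles of $\Delta_{i-1}$ through $v_i$, and an isolated vertex $w$ of the link would force $\{v_i,w\}$ to be a $1$-dimensional facet of $\Delta_{i-1}$, contradicting purity. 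Since the link is vertex decomposable by definition, Lemma \ref{1dim} gives its connectedness, proving (1). For (2), exactly $3$ vertices remain in $\Delta_{n-3}$, and a pure $2$-dimensional complex on $3$ vertices must be the $2$-simplex.

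For $(ii)\Rightarrow(i)$, I would run a reverse induction from $i=n-3$ down to $0$, carrying the hypothesis that $\Delta_i$ is pure $2$-dimensional and vertex decomposable. The base is (2): the $2$-simplex is vertex decomposable via clause (2) of the definition. For the step, I first establish that $\Delta_{i-1}$ is pure $2$-dimensional by splitting its facets into those avoiding $v_i$ (which are facets of $\Delta_i$, hence triangles by induction) and those containing $v_i$ (which are of the form $\{v_i\}\cup G$ with $G$ a facet of the link; since a connected $1$-dimensional complex cannot contain an isolated vertex, the link is pure $1$-dimensional, so $G$ is an edge and the facet is a triangle). With this purity, $v_i$ becomes a shedding vertex for $\Delta_{i-1}$: its link is vertex decomposable by Lemma \ref{1dim}, $\Delta_i$ is vertex decomposable by induction, and any facet of $\Delta_i$, being a triangle, is automatically maximal in the pure $2$-dimensional $\Delta_{i-1}$, so the shedding clause holds and clause (3) of the definition is verified.

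The main hurdle I anticipate is the simultaneous propagation of the two invariants—pure $2$-dimensionality and vertex decomposability—since each feeds into the hypotheses needed for the other. In the reverse direction especially, converting the combinatorial condition (1) into the structural statement that $\Delta_{i-1}$ itself is pure $2$-dimensional is the delicate step: one must carefully match facets of $\Delta_{i-1}$ against facets of $\Delta_i$ and facets of $\link_{\Delta_{i-1}}(v_i)$, using connectedness of the link precisely to exclude isolated vertices and thus low-dimensional facets in $\Delta_{i-1}$.
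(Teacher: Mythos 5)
Your proposal is correct and takes essentially the same route as the paper: both directions come down to Lemma \ref{1dim} applied to the links $\link_{\Delta_{i-1}}(v_i)$, plus the observation that an edge (or vertex) facet surviving in some $\Delta_j$ would manifest as an isolated vertex in a later link, i.e.\ would violate the facet-preservation clause. The only real difference is bookkeeping: you carry pure $2$-dimensionality of the $\Delta_i$ as an explicit forward/backward induction invariant, whereas the paper argues (i)$\Rightarrow$(ii) by contraposition and establishes the claim that facets of each $\Delta_i$ remain facets of $\Delta$ by a direct argument; the mathematical content is the same.
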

\begin{proof}
(i) $\Rightarrow$ (ii). By contraposition, we assume that one of the following is true:
\begin{enumerate}
\item[(1)'] There exists a $k\in \{1,2,\ldots, n-3\}$ such that $\link_{\Delta_{k-1}}(v_k)$ is disconnected or 0-dimensional;
\item[(2)'] $\Delta_{n-3}$ is not the simplex on $3$ vertices.
\end{enumerate}
If (1)' and there exists a $k\in \{1,2,\ldots, n-3\}$ such that $\link_{\Delta_{k-1}}(v_k)$ is a disconnected and $1$-dimensional, then $\link_{\Delta_{k-1}}(v_k)$ is not vertex decomposable according to Lemma \ref{1dim}, hence $\Delta$ is not vertex decomposable. If it is $0$-dimensional, then there exists an isolated vertex $b$ in $\link_{\Delta_{k-1}}(v_k)$, that is $\{v_{k},b\}\in \mathcal{F}(\Delta_{k-1})$ and the facets of $\Delta_{k-1}$ are not facets of $\Delta$, that contradicts the assumption of vertex decomposability.
If (2)', then $\dim \Delta_{n-3} < 2$ and so the facets of $\Delta_{n-3}$ are not facets in $\Delta$.\\
(ii) $\Rightarrow$ (i).
%According to the definition (2), since $\Delta$ is not the simplex on $n$ vertices, we have to prove that for any $x\in \Delta$, both $\link_{\Delta}(x)$ and $\del_{\Delta}(x)$ are vertex decomposable and the facets of $\del_{\Delta}(x)$ are facets of $\Delta$. Namely, we have to find a decomposing sequence $(x_i)_{i=1,\ldots , n-3}$. We claim that such a sequence is $\mathcal{M}$. 
%The definition of the $\Delta_i$'s in the Lemma is a compact way to represent the recursive definition of vertex decomposability. 
We claim the sequence $\mathcal{M}$ is a sequence that is a vertex decomposition of $\Delta$.
From Lemma \ref{1dim} and the property (ii).($1$) we obtain that $\link_{\Delta_{i-1}}(v_i)$ for $i=1,2,\ldots, n-3$ are vertex decomposable.  Hence to prove that $\Delta$ is vertex decomposable we are left with proving the following\\
\textbf{Claim:  }$\mbox{For any } i=1,\ldots,n-3 \mbox{ the facets of } \Delta_i  \mbox{ are facets of } \Delta$.\\
Let us assume condition (ii).($2$) and that there exist a $j\in \{1,2,\ldots,n-3\}$ and $\{a,b\}$ is a facet of $\Delta_j$. Then at least one between $a$, and $b$ leaves in $\mathcal{M}$. In fact, if both $a,b$ do not live in $\mathcal{M}$, then $\{a,b\}$ will be an edge of $\Delta_{n-3}$, that we recall is a simplex on $3$ vertices. That is impossible.
So let us assume that there exists a $k>i$ such that $a=v_k$ and $\{v_{k},b \} \in \mathcal{F}(\Delta_j) $. It implies that $\link_{\Delta_{k-1}}(v_k)$ contains $b$ as isolated vertex, that contradicts the property (ii).($1$). 
Hence the claim follows.
\end{proof}

Now we prove the main theorem.

\begin{proof}[Proof of Theorem \ref{2vd}]
From Proposition \ref{2wel}, $\Delta$ is pure and $2$-dimensional.
To prove the vertex decomposability of $\Delta$, it is useful to define the following edge sets
\[
E(\mathcal{H}_{v}^{l})=\Big\{ \{v-2^{i},v-2^{i+1}\}_{i=l,l+1,\ldots,m-1}, \{v-2^{m}, v-2^{m}-1\} \Big\},
\]
\[
E(\mathcal{P}^{l}_v)=\Big\{ \{v+2^{i},v+2^{i+1}\}_{i=0,1,\ldots, l-1} \Big\}
\]
that are the edges of two paths,
\[
E(\mathcal{G}_{v})=\Big\{ \{v+2^{i},v+2^{i+1}\}_{i=0,1,\ldots, m-1}\Big\} \cup \Big\{\{v+2^{m},v+2^{m}-1\}\Big\},
\]
\[
E(\mathcal{L}_{v})=\Big\{ \{v-2^{i},v-2^{i+1}\}_{i=0,1,\ldots, m-1}\Big\} \cup \Big\{\{v-2^{m},v-2^{m}+1\}\Big\},
\]
that are the edges of two cycles with an extra edge and
\[
E(\mathcal{B}^l_{v})=\Big\{ \{v-2^{i},v+2^{i}\}_{i=l,l+1,\ldots,m} \Big\}
\]
that are disjoint edges connecting $E(\mathcal{L}_{v})$ and $E(\mathcal{G}_{v})$.
We will prove that $\Delta$ is vertex decomposable by using Lemma \ref{weldel}, that is we want to find a sequence of vertices $v_1,v_2,\ldots ,v_{n-3}$ satisfying (ii).$(1)$ and (ii).$(2)$.
We claim that such a sequence is 
\[
1,2,\ldots,\widehat{2}^m,2^{m}+1,\ldots,\widehat{2}^{m+1},\ldots,  n-1.
\]
Let us consider the vertices $v$ in $1, 2, \ldots, 2^{m}-1$.\\
For $v=1$ and $\Delta_{0}=\Delta$, $\link_\Delta (1)$ is vertex decomposable. In fact, for any $v \in V(G)$, $\mathcal{F}(\link_\Delta (v))$ is, by abuse of notation, 
\[
E(\mathcal{L}_{v}) \cup E(\mathcal{G}_{v}) \cup E(\mathcal{B}_{v}^{0}) \cup \Big\{\{ v-1,v+2^{m}-1 \}, \{ v+1,v-2^{m}+1 \} \Big\}
\]
that is $1$-dimensional and connected (see Figure \ref{v1}).\\
We describe the first steps $v=2,3,4$ before giving the general set \eqref{E1} for $\FF(\link_{\Delta_{v-1}}(v))$ with $v$ in the interval $[2,2^{m}-1]$. \\
For $v=2$, we have that the vertex $1=v-1$ is not in $\link_{\Delta_{1}}(v)$, hence $\FF(\link_{\Delta_{1}}(v))$ is equal to
\[
E(\mathcal{H}_{v}^{1})\cup E(\mathcal{B}^1_{v}) \cup \Big\{\{v-2^{m}+1,v+1 \}\Big\} \cup E(\mathcal{G}_{v}),
\]
that is 1-dimensional and connected (see Figure \ref{v2}).  From now on, we omit the last observation that will be clear by the descriptions of the links.\\
For $v= 3$, we have that $2=v-1, \ \ 1=v-2 \notin V( \Delta_2)$, hence the edges $\{v-2,v+4\}$ and $\{v-2,v+2\} $ are not in $\Delta_2$ (see Figure \ref{v3}), and $\FF(\link_{\Delta_{2}}(v))$ is
\[
E(\mathcal{H}_{v}^{2})\cup E(\mathcal{B}^2_{v})  \cup  \Big\{\{v-2^{m}+1,v+1 \} \Big\} \cup E(\mathcal{G}_{v}).
\]
For $v=4$, the same facts of the case $v=3$ hold. Hence, $\link_{\Delta_{3}}(v)$ is isomorphic to $\link_{\Delta_{2}}(3)$.
To get the general set we observe that for $2\leq v \leq 2^{m}-1$, we have two cases: if $v-1 \in \bar{S}$ we loose the edge $\{v-2^{l},v-2^{l+1} \}$ of $E(\mathcal{H}^l_{v})$ and the edge $\{v-2^{l},v+2^{l} \}$ of $E(\mathcal{B}^l_{v})$ from the edges of $\link_{\Delta_{v-2}}(v-1)$, as in the cases $v=2,3$; otherwise $\link_{\Delta_{v-1}}(v)$ is isomorphic to $\link_{\Delta_{v-2}}(v-1)$, as in the case $v=4$.
To get the set \eqref{E1}, we pose
\[
j(v)=\min\{l\in \mathbb{N}: 2^{l-1} \leq v-1 < 2^{l}\}
\]
for $2\leq v \leq 2^{m}-1$. Moreover we have $1\leq v-1 < 2^m$, so that $j(v)\leq m$, and $\FF(\link_{\Delta_{v-1}}(v))$ is equal to
\begin{equation}\label{E1}
E(\mathcal{H}_{v}^{j(v)})\cup E(B_{v}^{j(v)}) \cup  \Big\{\{v-2^{m}+1,v+1 \} \Big\} \cup E(\mathcal{G}_{v})
\end{equation}
that is connected because $\mathcal{G} $ and $\mathcal{H}$ are, and they are joined by $ \{v-2^{m}+1,v+1 \}$.
Now, we consider  the vertices $v$ in $2^m+1, 2^{m}+2, \ldots,  2^{m+1}-1$. \\
For $v=2^m +1$, since $2^m$ has not been removed, $2^m=v-1 \in V(\link_{\Delta_{v-2}}(v))$ (see Figure \ref{v9}), and we have
\[
\FF(\link_{\Delta_{v-2}}(v))=\Big\{\{v-1,v+1\}\Big\} \cup \Big\{\{v-1,v+2^{m}-1\}\Big\}  \cup E(\mathcal{G}_{v}).
\]
We exploit the steps $v=2^{m}+2,2^{m}+3$ before giving the general set \eqref{E2} for $2^m+2 \leq v \leq 2^{m+1}-2$.\\
For $v=2^m +2$, since $2^m$ has not been removed, $2^{m}=v-2 \in V(\link_{\Delta_{v-2}}(v))$ (see Figure \ref{v10}) and
\[
\FF(\link_{\Delta_{v-2}}(v))=\Big\{\{v-2,v+2\}\Big\}\ \cup E( \mathcal{G}_{v}).
\]
For $v=2^m +3$, $v$ is not adjacent to $2^m$ (since $3\notin \bar{S}$) and since we removed the vertex $w$ for $1\leq w\leq 2^{m}-1$,
\[
\FF(\link_{\Delta_{v-2}}(v))= E( \mathcal{G}_{v})
\]
(see Figure \ref{v11}).
In general, for $2^m+2 \leq v \leq 2^{m+1}-2$, since the only vertex in $\{1,\ldots , 2^m\}$ that we have not removed is $2^m$, when $v =2^m+2^j$, then $v-2^{j} \in V(\link_{\Delta_{v-2}} (v))$, that is
\begin{equation}\label{E2}
\FF(\link_{\Delta_{v-2}}(v))=\begin{cases} \Big\{\{v-2^j,v+2^j\}\Big\} \cup E(\mathcal{G}_{v}) &\mbox{if } v=2^m+2^j  \\  E(\mathcal{G}_{v}) &\mbox{otherwise.}  \end{cases} 
\end{equation}
For $v=2^{m+1}-1$, since $v$ is adjacent to $2^m$, we have
\[
\FF(\link_{\Delta_{v-2}}(v))=\Big\{\{v-2^m+1,v+1\}\Big\} \cup E(\mathcal{G}_{v})
\]
(see Figure \ref{v15}).
To complete the decomposition, we need to remove the vertices $v$ in $2^{m+1}+1,\ldots, n-1$. \\
For $v=2^{m+1}+1$, since $2^{m+1}$ has not been removed, $2^{m+1}=v-1 \in V(\link_{\Delta_{v-3}}(v))$. On the other hand, $1=v+2^m$ has been removed (Figure \ref{v17}), hence
\[
\FF(\link_{\Delta_{v-3}}(v))=\Big\{\{v+2^{m}-1,v-1\}\Big\} \cup \Big\{\{v-1,v+1\}\Big\} \cup E(\mathcal{P}_{v}^{m-1}).
\]
For $2^{m+1}+2\leq v \leq n-2=2^{m+1}+2^{m}-2$, we set
\[
k(v)=\min \{l\in \mathbb{N} : 2^{l} \leq n-v < 2^{l+1} \}.
\]
We observe that the path $\mathcal{P}_{v}^{k(v)}$ is contained in $\link_{\Delta_{v-3}}(v)$ (see Figure \ref{v18} and Figure \ref{v19}). Moreover, if $v =2^{m+1}+2^{j}$, then $v$ is adjacent to $2^{m+1}$, and $\link_{\Delta_{v-3}}(v)$ contains the edge $\{v-2^{j},v+2^j\}$ (see Figure \ref{v18}). Hence, for $2^{m+1}+2\leq v \leq n-2$ we have
\[
\FF(\link_{\Delta_{v-3}}(v))=\begin{cases} \Big\{\{v-2^j,v+2^j\}\Big\} \cup E(\mathcal{P}_{v}^{k(v)})&\mbox{if } v=2^{m+1}+2^j  \\  E(\mathcal{P}_{v}^{k(v)}) &\mbox{otherwise.} \end{cases}
\]
The existence of $\{v-2^j,v+2^j\}$ is guaranteed by the inequality $2^{j}\leq 2^{k(v)}$. In fact, $n-v=2^{m+1}+2^m-2^{m+1}-2^j=2^{m}-2^j$. Since $j\leq m-1$, then
\[2^{m-1} \leq 2^{m}-2^{j}\leq 2^m.\]
So $k(v)=m-1$ for $v=2^{m+1}+2^j$. \\ The last vertex that we remove is $v=n-1$. We have not removed yet $2^{m+1}=v-2^{m}+1$ and $0=v+1$. Therefore its link is formed only by the edge $\{v-2^{m}+1,v+1\}$ (see Figure \ref{v23}). The only vertices that we have not removed are $\{0,2^{m},2^{m+1}\}$, that is a simplex on $3$ vertices. The assertion follows.
\end{proof}

Now we give an example of circulant belonging to the class above.

\begin{Example}\label{exam}
The first circulant of the class is $G=C_{24}(\ol{1,2,4,7,8})$ and let $\Delta=\Delta (G)$. 
We want to prove that the sequence
\[
\mathcal{M}=1,2,\ldots,\widehat{8},9,\ldots,\widehat{16},17,\ldots,23].
\]
satisfies the conditions $(1)$ and $(2)$ of Lemma \ref{weldel}.\\
By using the notation of the proof of Theorem \ref{2vd}, we have 
\[
E(\mathcal{H}_{v}^{1})=\Big\{ \{v-2,v-4\}, \{v-4,v-8\},\{v-8, v-7\} \Big\},
\]
%\[
%E(\mathcal{H}_{v}^{2})=\Big\{ \{v-4,v-8\}\{v-8, v-7\} \Big\},
%\]
%\[
%E(\mathcal{H}_{v}^{3})=\Big\{\{v-8, v-7\} \Big\},
%\]
\[
E(\mathcal{G}_{v})=\Big\{ \{v+1,v+2\},\{v+2,v+4\},\{v+4,v+8\},\{v+8,v+1\},\{v+8,v+7\}, \Big\},
\]
\[
E(\mathcal{P}^{2}_v)=\Big\{\{v+1,v+2\},\{v+2,v+4\} \Big\},
\]
\[
E(\mathcal{P}^{1}_v)=\Big\{\{v+1,v+2\}  \Big\},
\]
\[
E(\mathcal{B}^1_{v})=\Big\{\{v-2,v+2\},\{v-4,v+4\},\{v-8,v+8\}  \Big\}
\]
%\[
%E(\mathcal{B}^2_{v})=\Big\{\{v-4,v+4\},\{v-8,v+8\}  \Big\}
%\]
%\[
%E(\mathcal{B}^3_{v})=\Big\{\{v-8,v+8\}  \Big\}
%\]
For $v=1$, $\link_{\Delta}(v)$ is represented in Figure \ref{v1} and it is $1$-dimensional and connected. From now on, we omit last observation that will be clear by the figures.
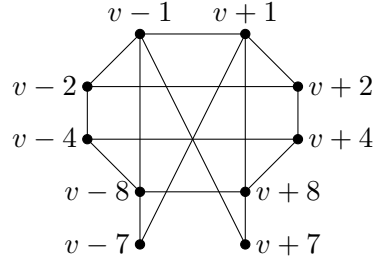
\begin{figure}[h]
\begin{center}
\begin{tikzpicture}[scale=0.7]
\draw[black!100] (-1,1) -- (1,1);
\draw[black!100]  (-2,-1) --(2,-1);
\draw[black!100] (1,1) -- (2,0);
\draw[black!100] (2,-1) -- (2,0);
\draw[black!100]  (-2,0) -- (2,0);
\draw[black!100]  (-2,0)--(-2,-1);
\draw[black!100]  (-2,0)--(-1,1);
\draw[black!100]  (-1,-2)--(-2,-1);
\draw[black!100] (2,-1) -- (1,-2);
\draw[black!100]  (-1,-2) -- (1,-2);
\draw[black!100]  (-1,-3) -- (-1,-2);
\draw[black!100] (1,-3) -- (1,-2);
\draw (-1,-2) -- (-1,1);
\draw[black!100] (1,-2) -- (1,1);
\draw (1,-3) -- (-1,1);
\draw (-1,-3) -- (1,1);
\filldraw (-1,-3) circle (2.5 pt) node [anchor=east] {$v-7$};
\filldraw (1,-3) circle (2.5 pt) node [anchor=west] {$v+7$};
\filldraw (-1,-2) circle (2.5 pt) node [anchor=east] {$v-8$};
\filldraw (1,-2) circle (2.5 pt) node [anchor=west] {$v+8$};
\filldraw (-2,0) circle (2.5 pt) node [anchor=east] {$v-2$};
\filldraw (2,0) circle (2.5 pt) node [anchor=west] {$v+2$};
\filldraw (1,1) circle (2.5 pt) node [anchor=south] {$v+1$};
\filldraw (-1,1) circle (2.5 pt) node [anchor=south] {$v-1$};
\filldraw (2,-1) circle (2.5 pt) node [anchor=west] {$v+4$};
\filldraw (-2,-1) circle (2.5 pt) node [anchor=east] {$v-4$};
\end{tikzpicture}\caption{$\link_\Delta (v)$ for any $v \in V(G)$.}\label{v1}
\end{center}
\end{figure}
\\Then we define $\Delta_1=\del_{\Delta}(1)$ and we study $\link_{\Delta_{v-1}}(v)$ when $v=2$. Since $v-1=1$ and we have removed the vertex $1$, $v-1$ does not appear in $\link_{\Delta_{v-1}}(v)$ (Figure \ref{v2}).
We point out that the latter is formed by $\mathcal{H}_{v}^{1}$, $\mathcal{B}_{v}^1$, $\mathcal{G}_{v}$ and the edge $\{v-7,v+1\}$.
\\Then we set $\Delta_2=\del_{\Delta}(2)$ and we look at $\link_{\Delta_{v-1}}(v)$ when $v=3$. Since we removed $1=v-2$ and $2=v-1$, then $v-1,v-2 \notin \link_{\Delta_{v-1}}(v)$ (Figure \ref{v3}).
\begin{figure}[h]
\begin{center}
\begin{tikzpicture}[scale=0.7]
\draw (-2,-1) --(2,-1);
\draw (1,1) -- (2,0);
\draw (2,-1) -- (2,0);
\draw (-2,0) -- (2,0);
\draw (-2,0)--(-2,-1);
\draw (-1,-2)--(-2,-1);
\draw (2,-1) -- (1,-2);
\draw (-1,-2) -- (1,-2);
\draw (-1,-3) -- (-1,-2);
\draw (1,-3) -- (1,-2);
\draw (1,-2) -- (1,1);
\draw (-1,-3) -- (1,1);
\filldraw (-1,-3) circle (2.5 pt) node [anchor=east] {$v-7$};
\filldraw (1,-3) circle (2.5 pt) node [anchor=west] {$v+7$};
\filldraw (-1,-2) circle (2.5 pt) node [anchor=east] {$v-8$};
\filldraw (1,-2) circle (2.5 pt) node [anchor=west] {$v+8$};
\filldraw (-2,0) circle (2.5 pt) node [anchor=east] {$v-2$};
\filldraw (2,0) circle (2.5 pt) node [anchor=west] {$v+2$};
\filldraw (1,1) circle (2.5 pt) node [anchor=south] {$v+1$};
\filldraw (2,-1) circle (2.5 pt) node [anchor=west] {$v+4$};
\filldraw (-2,-1) circle (2.5 pt) node [anchor=east] {$v-4$};
\end{tikzpicture}\caption{$\link_{\Delta_{v-1}}(v)$ for $v=2$.}\label{v2}
\end{center}
\end{figure}
\begin{figure}[h]
\begin{center}
\begin{tikzpicture}[scale=0.7]
\draw (-2,-1) --(2,-1);
\draw (1,1) -- (2,0);
\draw (2,-1) -- (2,0);
\draw (-1,-2)--(-2,-1);
\draw (2,-1) -- (1,-2);
\draw (-1,-2) -- (1,-2);
\draw (-1,-3) -- (-1,-2);
\draw (1,-3) -- (1,-2);
\draw (1,-2) -- (1,1);
\draw (-1,-3) -- (1,1);
\filldraw (-1,-3) circle (2.5 pt) node [anchor=east] {$v-7$};
\filldraw (1,-3) circle (2.5 pt) node [anchor=west] {$v+7$};
\filldraw (-1,-2) circle (2.5 pt) node [anchor=east] {$v-8$};
\filldraw (1,-2) circle (2.5 pt) node [anchor=west] {$v+8$};
\filldraw (2,0) circle (2.5 pt) node [anchor=west] {$v+2$};
\filldraw (1,1) circle (2.5 pt) node [anchor=south] {$v+1$};
\filldraw (2,-1) circle (2.5 pt) node [anchor=west] {$v+4$};
\filldraw (-2,-1) circle (2.5 pt) node [anchor=east] {$v-4$};
\end{tikzpicture}\caption{$\link_{\Delta_{v-1}}(v)$ for $v=3,4$.}\label{v3}
\end{center}
\end{figure}
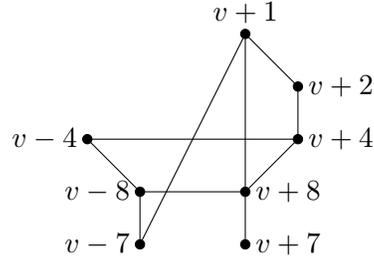
\\For $v=4$, the same facts of the case $v=3$ hold. Hence $\link_{\Delta_{v-1}}(v)$ is isomorphic to $\link_{\Delta_{v-2}}(v-1)$ (Figure \ref{v3}). \\For $v=5$, since we have removed $1=v-4, \ 3=v-2$ and $4=v-1$, then they do not appear in $\link_{\Delta_{v-1}}(v)$ (Figure \ref{v5}).
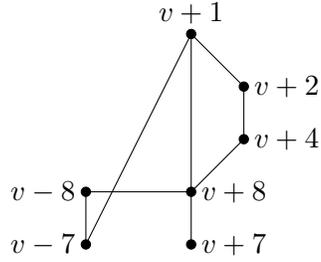
\begin{figure}[h]
\begin{center}
\begin{tikzpicture}[scale=0.7]
\draw (1,1) -- (2,0);
\draw (2,-1) -- (2,0);
\draw (2,-1) -- (1,-2);
\draw (-1,-2) -- (1,-2);
\draw (-1,-3) -- (-1,-2);
\draw (1,-3) -- (1,-2);
\draw (1,-2) -- (1,1);
\draw (-1,-3) -- (1,1);
\filldraw (-1,-3) circle (2.5 pt) node [anchor=east] {$v-7$};
\filldraw (1,-3) circle (2.5 pt) node [anchor=west] {$v+7$};
\filldraw (-1,-2) circle (2.5 pt) node [anchor=east] {$v-8$};
\filldraw (1,-2) circle (2.5 pt) node [anchor=west] {$v+8$};
\filldraw (2,0) circle (2.5 pt) node [anchor=west] {$v+2$};
\filldraw (1,1) circle (2.5 pt) node [anchor=south] {$v+1$};
\filldraw (2,-1) circle (2.5 pt) node [anchor=west] {$v+4$};
\end{tikzpicture}\caption{$\link_{\Delta_{v-1}}(v)$ for $v=5,6,7$.}\label{v5}
\end{center}
\end{figure}
\\
For $v=6$ and $7$, the same facts of the case $v=5$ hold and their links are isomorphic to the one in Figure \ref{v5}. \\
Now we jump from $v=7$ to $v=9$, without removing the vertex $8$. It implies that we have $v-1=8$ is in $\link_{\Delta_{v-2}}(v)$ (Figure \ref{v9}).
\begin{figure}[h]
\begin{center}
\begin{tikzpicture}[scale=0.7]
\draw (1,1)-- (-1,1);
\draw (1,1) -- (2,0);
\draw (2,-1) -- (2,0);
\draw (2,-1) -- (1,-2);
\draw (1,-3) -- (1,-2);
\draw (1,-2) -- (1,1);
\draw (1,-3) -- (-1,1);
\filldraw (1,-3) circle (2.5 pt) node [anchor=west] {$v+7$};
\filldraw (1,-2) circle (2.5 pt) node [anchor=west] {$v+8$};
\filldraw (2,0) circle (2.5 pt) node [anchor=west] {$v+2$};
\filldraw (1,1) circle (2.5 pt) node [anchor=south] {$v+1$};
\filldraw (-1,1) circle (2.5 pt) node [anchor=south] {$v-1$};
\filldraw (2,-1) circle (2.5 pt) node [anchor=west] {$v+4$};
\end{tikzpicture}\caption{$\link_{\Delta_{v-2}}(v)$ for $v=9$.}\label{v9}
\end{center}
\end{figure}
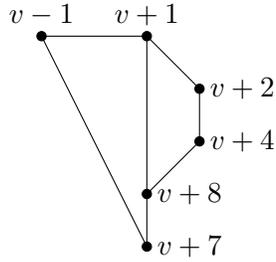
\\ From $v=10$ to $v=14$, $\link_{\Delta_{v-2}}(v)$ is formed by $\mathcal{G}_v$ (Figure \ref{v11}), and the edge connecting $8$ to $\mathcal{G}_v$, when $v=8+2^j$, namely $\{8=v-2^{j},v+2^{j}\}$ (Figure \ref{v10} and Figure \ref{v12}).
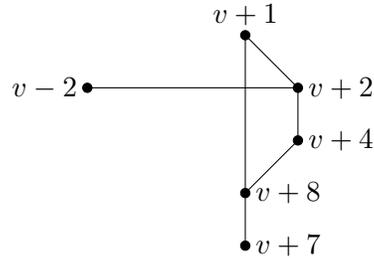
\begin{figure}[h]
\begin{center}
\begin{tikzpicture}[scale=0.7]
\draw (-2,0) -- (2,0);
\draw (1,1) -- (2,0);
\draw (2,-1) -- (2,0);
\draw (2,-1) -- (1,-2);
\draw (1,-3) -- (1,-2);
\draw (1,-2) -- (1,1);
\filldraw (1,-3) circle (2.5 pt) node [anchor=west] {$v+7$};
\filldraw (1,-2) circle (2.5 pt) node [anchor=west] {$v+8$};
\filldraw (2,0) circle (2.5 pt) node [anchor=west] {$v+2$};
\filldraw (1,1) circle (2.5 pt) node [anchor=south] {$v+1$};
\filldraw (2,-1) circle (2.5 pt) node [anchor=west] {$v+4$};
\filldraw (-2,0) circle (2.5 pt) node [anchor=east] {$v-2$};
\end{tikzpicture}\caption{$\link_{\Delta_{v-2}}(v)$ for $v=10$.}\label{v10}
\end{center}
\end{figure} \\
\begin{figure}[h]
\begin{center}
\begin{tikzpicture}[scale=0.7]
\draw (0,1) -- (1,0);
\draw (1,-1) -- (1,0);
\draw (1,-1) -- (0,-2);
\draw (0,-3) -- (0,-2);
\draw (0,-2) -- (0,1);
\filldraw (0,-3) circle (2.5 pt) node [anchor=west] {$v+7$};
\filldraw (0,-2) circle (2.5 pt) node [anchor=west] {$v+8$};
\filldraw (1,0) circle (2.5 pt) node [anchor=west] {$v+2$};
\filldraw (0,1) circle (2.5 pt) node [anchor=south] {$v+1$};
\filldraw (1,-1) circle (2.5 pt) node [anchor=west] {$v+4$};
\end{tikzpicture}\caption{$\link_{\Delta_{v-2}}(v)$ for $v=11,13,14$.}\label{v11}
\end{center}
\end{figure}
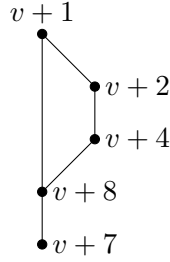
\begin{figure}[h]
\begin{center}
\begin{tikzpicture}[scale=0.7]
\draw (-2,-1) -- (2,-1);
\draw (1,1) -- (2,0);
\draw (2,-1) -- (2,0);
\draw (2,-1) -- (1,-2);
\draw (1,-3) -- (1,-2);
\draw (1,-2) -- (1,1);
\filldraw (1,-3) circle (2.5 pt) node [anchor=west] {$v+7$};
\filldraw (1,-2) circle (2.5 pt) node [anchor=west] {$v+8$};
\filldraw (2,0) circle (2.5 pt) node [anchor=west] {$v+2$};
\filldraw (1,1) circle (2.5 pt) node [anchor=south] {$v+1$};
\filldraw (2,-1) circle (2.5 pt) node [anchor=west] {$v+4$};
\filldraw (-2,-1) circle (2.5 pt) node [anchor=east] {$v-4$};
\end{tikzpicture}\caption{$\link_{\Delta_{v-2}}(v)$ for $v=12$.}\label{v12}
\end{center}
\end{figure}
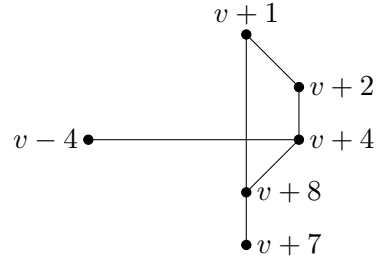
\ \\
For $v=15$, $8=v-7 $ appears in $\link_{\Delta_{v-2}}(v)$ and once again it is connected to $\mathcal{G}_v$  (Figure \ref{v15}).
\begin{figure}[h]
\begin{center}
\begin{tikzpicture}[scale=0.7]
\draw (-1,-3) -- (1,1);
\draw (1,1) -- (2,0);
\draw (2,-1) -- (2,0);
\draw (2,-1) -- (1,-2);
\draw (1,-3) -- (1,-2);
\draw (1,-2) -- (1,1);
\filldraw (1,-3) circle (2.5 pt) node [anchor=west] {$v+7$};
\filldraw (1,-2) circle (2.5 pt) node [anchor=west] {$v+8$};
\filldraw (2,0) circle (2.5 pt) node [anchor=west] {$v+2$};
\filldraw (1,1) circle (2.5 pt) node [anchor=south] {$v+1$};
\filldraw (2,-1) circle (2.5 pt) node [anchor=west] {$v+4$};
\filldraw (-1,-3) circle (2.5 pt) node [anchor=east] {$v-7$};
\end{tikzpicture}\caption{$\link_{\Delta_{v-2}}(v)$ for $v=15$.}\label{v15}
\end{center}
\end{figure}
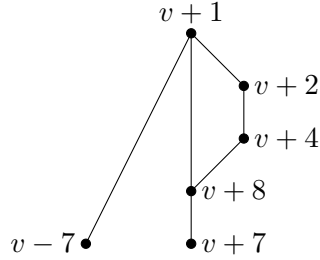
\\
We jump from $v=15$ to $v=17$. Since we have not removed $16=v-1$, then $v-1$ is contained in $\link_{\Delta_{v-3}}(v)$ (analogously to the case $v=9$).
On the other hand, since we have removed $1=v+8$, it does not appear in $\link_{\Delta_{v-3}}(v)$ (Figure \ref{v17}), that is the path $\mathcal{P}_{v}^{2}$ plus the edges $\{v+1,v-1\}$ and $\{v-1,v+7\}$.
\begin{figure}
\begin{center}
\begin{tikzpicture}[scale=0.7]
\draw (1,1)-- (-1,1);
\draw (1,1) -- (2,0);
\draw (2,-1) -- (2,0);
\draw (1,-3) -- (-1,1);
\filldraw (1,-3) circle (2.5 pt) node [anchor=west] {$v+7$};
\filldraw (2,0) circle (2.5 pt) node [anchor=west] {$v+2$};
\filldraw (1,1) circle (2.5 pt) node [anchor=south] {$v+1$};
\filldraw (-1,1) circle (2.5 pt) node [anchor=south] {$v-1$};
\filldraw (2,-1) circle (2.5 pt) node [anchor=west] {$v+4$};
\end{tikzpicture}\caption{$\link_{\Delta_{v-3}}(v)$ for $v=17$.}\label{v17}
\end{center}
\end{figure}
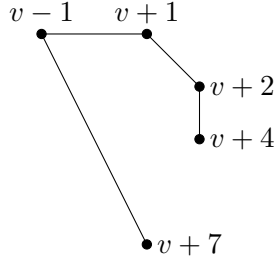
\\
From $v=18$ to $v=20$, $\link_{\Delta_{v-3}}(v)$ is formed by $\mathcal{P}^2_v$ (Figure \ref{v19}), and the edge connecting $16$ to $\mathcal{P}^2_v$, when $v=16+2^j$, namely $\{16=v-2^{j},v+2^{j}\}$ (e.g. Figure \ref{v18}).
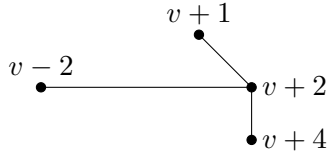
\begin{figure}[h]
\begin{center}
\begin{tikzpicture}[scale=0.7]
\draw (2,0)--(-2,0);
\draw (1,1) -- (2,0);
\draw (2,-1) -- (2,0);
\filldraw (2,0) circle (2.5 pt) node [anchor=west] {$v+2$};
\filldraw (1,1) circle (2.5 pt) node [anchor=south] {$v+1$};
\filldraw (-2,0) circle (2.5 pt) node [anchor=south] {$v-2$};
\filldraw (2,-1) circle (2.5 pt) node [anchor=west] {$v+4$};
\end{tikzpicture}\caption{$\link_{\Delta_{v-3}}(v)$ for $v=18$.}\label{v18}
\end{center}
\end{figure}
\begin{figure}[h]
\begin{center}
\begin{tikzpicture}[scale=0.7]
\draw (1,1) -- (2,0);
\draw (2,-1) -- (2,0);
\filldraw (2,0) circle (2.5 pt) node [anchor=west] {$v+2$};
\filldraw (1,1) circle (2.5 pt) node [anchor=south] {$v+1$};
\filldraw (2,-1) circle (2.5 pt) node [anchor=west] {$v+4$};
\end{tikzpicture}\caption{$\link_{\Delta_{v-3}}(v)$ for $v=19$.}\label{v19}
\end{center}
\end{figure}
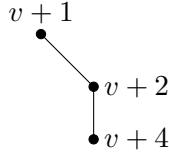
\\For $v=21,22$, they are not adjacent to $16$ and we have removed $1,2=v+4$. That is $\link_{\Delta_{v-3}}(v)$ is only $\mathcal{P}_{v}^{1}$.
\\For $v=23$, the only vertices not yet removed are $0=v+1$ and $16=v-7$ (Figure \ref{v23}).
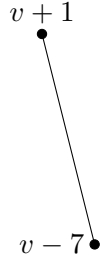
\begin{figure}[h]
\begin{center}
\begin{tikzpicture}[scale=0.7]
\draw (1,-3) -- (0,1);
\filldraw (0,1) circle (2.5 pt) node [anchor=south] {$v+1$};
\filldraw (1,-3) circle (2.5 pt) node [anchor=east] {$v-7$};
\end{tikzpicture}\caption{$\link_{\Delta_{v-3}}(v)$ for $v=23$.}\label{v23}
\end{center}
\end{figure}
Hence we have proved $(\mathrm{ii}).(1)$ of Lemma \ref{weldel}.
Then we are left with the only triangle $\{0,8,16\}$, that is a simplex on $3$ vertices, so that $(\mathrm{ii}).(2)$ of Lemma \ref{weldel} is satisfied. Hence $\Delta$ is vertex decomposable.
\end{Example}

\section{Level algebras and Gorenstein}\label{sec:levG}
In this section we prove that any 2-dimensional vertex decomposable independence complex of circulants has a level Stanley-Reisner ring.

\begin{Definition}
We say that a graph $G$ is \emph{$l$-connected} if for every subset
$S \subseteq V(G)$ of cardinality $|S| < l$, then $G$ restricted to the set of vertices $V \setminus S$ is connected.
We simply call \emph{connected graph} a $1$-connected graph, \emph{biconnected graph} a 2-connected graph, \emph{triconnected graph} a 3-connected graph.
\end{Definition}

\begin{Lemma}\label{tric}
Let $G=C_n(a,b)$ be a connected circulant graph. Then $G$ is triconnected.
\end{Lemma}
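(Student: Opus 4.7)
The plan is to show that for any pair of distinct vertices $u,v$, the subgraph $G\setminus\{u,v\}$ is connected. Since $G=C_n(a,b)$ is the Cayley graph of $\ZZ_n$ with connection set $\{\pm a,\pm b\}$, translation by $-u$ is an automorphism of $G$ and reduces the problem to the case $u=0$, $v=c$ for some $c\in\ZZ_n\setminus\{0\}$. Connectedness of $G$ forces $\gcd(a,b,n)=1$, and every vertex of $G$ has degree $4$ when $b\neq n/2$ and degree $3$ when $b=n/2$.

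A quick high-level route is to invoke the Mader--Watkins bound $\kappa(H)\geq\lceil 2(d(H)+1)/3\rceil$, valid for every connected vertex-transitive graph $H$ of degree $d(H)$. For $d(H)=3$ this yields $\kappa\geq 3$ and for $d(H)=4$ it yields $\kappa\geq 4$; since circulant graphs are vertex-transitive, $G$ is triconnected in either case. For a self-contained combinatorial argument, write $d_a=\gcd(a,n)$ and $d_b=\gcd(b,n)$: the $a$-edges of $G$ partition $V(G)$ into $d_a$ disjoint cycles of length $n/d_a$ (the cosets of $\langle a\rangle$ in $\ZZ_n$), and symmetrically the $b$-edges form $d_b$ cycles. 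Removing a single vertex breaks exactly one $a$-cycle and one $b$-cycle, each into a path, and leaves all remaining cycles intact; removing a second vertex causes at most two further such breaks. One then case-splits on whether $c$ lies in the $a$-cycle $A_0$ through $0$ and whether $c$ lies in the $b$-cycle $B_0$ through $0$, and in each case exhibits explicit paths between any two surviving vertices by first travelling along an intact cycle of one type and then switching cycle type using the remaining edges.

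The main obstacle is the case $c\in A_0\cap B_0$, i.e., $c\in\langle a\rangle\cap\langle b\rangle$, where both cycles through $0$ are severed in two places. In this regime the hypothesis $\gcd(a,b,n)=1$ is essential: since $\langle a\rangle$ and $\langle b\rangle$ jointly generate $\ZZ_n$, every vertex $x\in V(G)$ can be reached from the endpoint of a remnant arc by an alternating $a$-/$b$-walk whose interior can be chosen to avoid $\{0,c\}$, by exploiting the freedom to vary the numbers of $a$- and $b$-steps modulo the respective cycle lengths. Combining this detour with the surviving arcs of $A_0$ and $B_0$ produces a connected spanning subgraph of $G\setminus\{0,c\}$, completing the proof.
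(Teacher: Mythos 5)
Your primary argument is correct, but it goes by a genuinely different route than the paper. You reduce to a known general theorem: circulants are Cayley graphs of $\ZZ_n$, hence vertex-transitive, and the Watkins--Mader bound $\kappa(H)\geq \tfrac{2}{3}(d(H)+1)$ for connected vertex-transitive graphs gives $\kappa\geq 3$ when the valency is $3$ (the case $a$ or $b$ equal to $n/2$) and $\kappa\geq 4$ when it is $4$; either way $G$ is triconnected, and the valency computation and the reduction to deleting $\{0,c\}$ by translation are both fine. The paper instead gives a self-contained elementary proof: it deletes $0$ first (using that connected circulants are biconnected), then splits into the case where one of $a,b$ is coprime to $n$ (so that generator yields a Hamiltonian cycle and the other connection acts as a chord reconnecting the two arcs left after deleting a second vertex) and the case where neither is coprime to $n$ (partitioning the remaining vertices into the cosets $ia+\langle d\rangle$, $d=\gcd(b,n)$, each a $b$-cycle, consecutively joined by $a$-edges). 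Your approach buys brevity and extra strength (it even shows $4$-connectivity in the valency-$4$ case) at the price of importing a nontrivial external theorem; the paper's approach stays elementary and uses only the circulant structure. One caveat: your backup ``self-contained'' argument is only a sketch --- in the crucial case $c\in\langle a\rangle\cap\langle b\rangle$ the existence of an alternating $a$-/$b$-walk avoiding $\{0,c\}$ is asserted (``by exploiting the freedom to vary the numbers of $a$- and $b$-steps'') rather than proved, so on its own it would not be a complete proof; as written, your proof stands on the Watkins--Mader citation.
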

\begin{proof}
The graph $G$ is connected if and only if $\gcd(n,a,b)=1$.
 We have to prove that after we remove any two vertices the graph remains connected. We take out the vertex $0$. Since any connected circulant is biconnected, the remaining graph is connected. Let $G^* = G\setminus \{0\}$.
We have two cases:
\begin{itemize}
\item[(T1)] One of the elements $a,b$ is coprime with $n$.
\item[(T2)] Neither $a$ nor $b$ is coprime with $n$.
\end{itemize}
(T1) We assume $a$ coprime with $n$, then	
\[
V(G)=\{a,2a,\ldots,(n-1)a \}.
\] 
Let $b=sa$.
By removing the vertex $a$ (respectively the vertex $(n-1)a$), the remaining graph is connected through the path $\{2a,3a,\ldots,(n-1)a\}$ (respectively $\{a,2a,\ldots,(n-2)a\}$). So we need to consider the removal of $ia$ with $2 \leq i \leq n-2$. We end up with the two paths on vertices
\[
 A=\{a,\ldots, (i-1)a\}, \ \ B=\{(i+1)a,\ldots, (n-1)a\}.
\]
We prove that the two paths above are connected each other by some edges. We have two cases: $i\leq s$, $i > s$.
If $i\leq s$, then $i+1 \leq s+1 \in B$ and $\{a,a+b\}=\{a,(s+1)a\}\in E(G)$.
If $i>s$, since $1 <s  \leq i-1$, then  $(i+1-s)a \in A $. Hence $\{(i+1-s)a,(i+1)a\} \in E(G)$. The assertion follows

(T2) We assume $d=\gcd(b,n)$ and since $\gcd(a,b,n)=1$, then $a$ is coprime with $d$. Let $n=ld$. It follows that the vertex set $V(G^*)$ can be partitioned in 
\[
\begin{matrix}
V_{0}&= & &\{ d, &\ldots &(l-1)d \}, \\
V_1&=&\{a,&a+d, &\ldots &a+(l-1)d \}, \\
&&&& \vdots\\
V_{d-1}&=&\{(d-1)a, &(d-1)a+d, &\ldots &(d-1)a+(l-1)d \}.\\
\end{matrix}
\]

We observe that the sets $V_{i}$ and $V_{i+1}$ are connected each other since $a \in S$. Moreover each $V_i$ is connected, since $d \ | \ b \in S$, and if $r \neq 0$, then $V_{r}$ is a cycle and it is biconnected. It implies that after removing a vertex from $V_{r}$ with $r \neq 0$, the graph remains connected. So we assume $r=0$, and we remove the vertex $kd$ for some $k$. Hence we have to prove that the two sets 
\[
V_{0}'=\{d,\ldots, (k-1)d\},\ \  V_{0}''=\{(k+1)d,\ldots ,(l-1)d\}
\]
are connected each other. Take $x \in V_{0}'$ and $y \in V_{0}''$. Then $a+x,a+y \in V_{1} $. Since $V_1$ is connected, the assertion follows.
\end{proof}
We present an interesting observation on the reduced Euler characteristic of $2$-dimensional complexes for circulants.
\begin{Lemma}\label{EC}
Let $n \geq 6$ and $G=C_{n}(S)$ be circulant graph $\dim \Delta=2$.
Then\[
\widetilde{\chi}(\Delta) \neq 0.
\]
\end{Lemma}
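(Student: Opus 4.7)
The plan is to compute $\widetilde\chi(\Delta)=-1+f_0-f_1+f_2$ directly from the $f$-vector of $\Delta$, exploiting the vertex-transitivity of the circulant action, and then rule out $\widetilde\chi(\Delta)=0$ by a short divisibility argument modulo $n$.

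Set $s=|\bar S|$. Clearly $f_0=n$, and since the $1$-faces of $\Delta$ are exactly the edges of $\ol G=C_n(\bar S)$, each $a\in\bar S\setminus\{n/2\}$ contributes $n$ edges while the element $n/2$ (when it lies in $\bar S$) contributes only $n/2$; hence $f_1=ns$ if $n/2\notin\bar S$ and $f_1=ns-n/2$ otherwise. The combinatorics already developed in Section~\ref{sec:welc}, namely Proposition~\ref{2face} together with the orbit count of \cite[Lemma 1]{Ri}, gives
\[
f_2=\frac{n\,|\FF_0|}{3}=\frac{n(3t+\epsilon)}{3},
\]
where $\epsilon=1$ precisely when $n=3k$ with $k\in\bar S$ and $\epsilon=0$ otherwise.

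Substituting into the Euler characteristic, I would split into the four cases given by the two Boolean conditions $n/2\in\bar S$ and $\epsilon=1$. In each case the expression for $\widetilde\chi(\Delta)$, or for $2\widetilde\chi(\Delta)$ when $n$ is even, takes the form $n\cdot A(s,t)+c$, where $c$ is a small integer depending only on $\epsilon$ and the parity condition. Setting $\widetilde\chi(\Delta)=0$ then forces $n$ to divide a tiny number: in the cases $\epsilon=0$ one obtains $n\mid 1$ or $n\mid 2$; in the cases $\epsilon=1$, so $n=3k$, the constant $c$ involves $k$ itself, and the resulting equations force either $k=1$ (hence $n=3$) or a parity--divisibility clash.

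The only mildly delicate step is the last subcase, where $n/2\in\bar S$ forces $n$ even and $\epsilon=1$ forces $n=3k$, so that $k$ must be even; the divisibility condition then pins down $k=2$, $n=6$, and one checks by hand that the residual equation (of the shape $6(s-t)=10$) is unsolvable in integers. All other cases dispose of themselves in one or two lines, so the bulk of the work is the initial set-up of the $f$-vector and the clean orbit count inherited from Proposition~\ref{2face}.
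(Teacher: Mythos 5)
Your proposal is correct, and it runs on the same engine as the paper's proof: vertex-transitivity forces $n\mid 2f_1$ and $n\mid 3f_2$ (via Proposition~\ref{2face} and \cite[Lemma 1]{Ri}), so $\widetilde{\chi}(\Delta)=-1+f_0-f_1+f_2$ is congruent to $-1$ modulo a large divisor of $n$. The difference lies in the last step: the paper packages this as a one-line gcd argument, setting $d=\gcd(f_2,f_1,n)\geq n/6>1$ for $n>6$, concluding $\widetilde{\chi}(\Delta)\equiv -1 \pmod d$, and dismissing $n=6$ separately as an easy check; you instead write $f_1=ns$ or $ns-n/2$ and $f_2=n(3t+\epsilon)/3$ explicitly and run a four-case divisibility analysis. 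Your arithmetic checks out: the cases with $\epsilon=0$ give $n\mid 1$ or $n\mid 2$, the odd case with $\epsilon=1$ forces $k=1$, i.e.\ $n=3$, and in the delicate case $n/2\in\bar S$, $n=3k$ one indeed gets $k\bigl(11+6(t-s)\bigr)=2$ with $k$ even, hence $k=2$, $n=6$, and the impossible relation $6(s-t)=10$. A small bonus of your route is that it treats $n=6$ uniformly rather than as a separate verification; note also that, like the paper, you correctly refrain from invoking purity (so Remark~\ref{no2k} cannot be used to exclude $n/2\in\bar S$), which is precisely why the extra even subcase must be carried along.
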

\begin{proof}
If $n=6$ it is an easy task.
We consider the case $n>6$. The $\mathbf{f}$-vector of $\Delta$ is given by $(f_{0},f_{1},f_{2})$ and
\[
\widetilde{\chi}(\Delta)=-1+f_{0}-f_{1}+f_{2}=-1+n-f_{1}+f_{2}.
\]
Let $d=\gcd (f_{2},f_{1},n)$. By Lemma 2.2 of \cite{Ri} it follows that 
\[
f_{1}=\frac{nf_{1,0}}{2}, \ \ \  f_{2}=\frac{nf_{2,0}}{3}
\]
Hence in any case $d \in \{\frac{n}{6}, \frac{n}{3},\frac{n}{2},n   \}$. Since $n>6$, then $d>1$. Therefore it follows that 
\[
\widetilde{\chi}(\Delta)\equiv -1 \ \ \mod \ d
\]
and $\widetilde{\chi}(\Delta) \neq 0$.
\end{proof}

\begin{Lemma}\label{2S}
Let $G=C_{n}(\bar{S})$ be a circulant graph such that $\Delta$ is 2-dimensional and vertex decomposable. Then $|S|\geq 2$.
\end{Lemma}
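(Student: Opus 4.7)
My plan is to prove Lemma~\ref{2S} by contradiction: suppose $|S|\le 1$ and show that $\Delta$ cannot simultaneously be $2$-dimensional and vertex decomposable. The argument splits according to whether $|S|=0$ or $|S|=1$, and it uses the chain \emph{vertex decomposable} $\Rightarrow$ \emph{Cohen--Macaulay} $\Rightarrow$ \emph{connected} (for dimension $\ge 1$) that is recorded already in the preliminaries.

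For $|S|=0$ we have $\bar S=T$, so $G=C_n(\bar S)=K_n$ and $\Delta$ consists of $n$ isolated vertices, hence $\dim\Delta=0$, an immediate contradiction. For $|S|=1$, write $S=\{a\}$. Then $\Delta$ coincides with the clique complex of the complement graph $C_n(\{a\})$, which is a disjoint union of $\gcd(n,a)$ cycles of common length $n/\gcd(n,a)$. A cycle $C_m$ contributes a triangle to $\Delta$ only when $m=3$ and otherwise has clique complex of dimension at most $1$, so the hypothesis $\dim\Delta=2$ forces $n/\gcd(n,a)=3$; in particular $3\mid n$ and $a=n/3$. Thus $C_n(\{a\})=(n/3)\cdot K_3$ and $\Delta$ is the disjoint union of $n/3$ copies of the $2$-simplex.

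It now remains to invoke the implications vertex decomposable $\Rightarrow$ Cohen--Macaulay $\Rightarrow$ $\widetilde H_{0}(\Delta;K)=0$ (hence connected). The disjoint union $\Delta$ must therefore have a single component, forcing $n/3=1$, i.e.\ $n=3$; but then $G=C_3(\emptyset)$ is the empty graph on three vertices, a situation excluded by the operative setup of the lemma (which implicitly requires $G$ to have non-trivial edge structure and places us in the regime $n\ge 4$ together with Lemma~\ref{EC}'s assumption $n\ge 6$ on the ambient theory). The main obstacle I foresee is precisely the clean disposal of this boundary: formally $\Delta$ would then be the full $2$-simplex, which is trivially vertex decomposable, so the lemma holds only once this degenerate configuration is set aside — an exclusion that the paper's conventions should absorb without further comment.
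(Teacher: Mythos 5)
Your argument is correct and follows essentially the same route as the paper: reduce to the complement circulant $C_{n}(a)$, observe that $\dim\Delta=2$ forces $n=3a$ so that $\Delta$ is a disjoint union of triangles, and conclude via vertex decomposable $\Rightarrow$ Cohen--Macaulay $\Rightarrow$ connected. You are in fact somewhat more careful than the paper's one-line proof, which silently skips the $|S|=0$ case and the degenerate $n=3$ (empty graph) configuration that you correctly flag as excluded by convention.
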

\begin{proof}
By contraposition, let us assume that $G=C_{n}(a)$, for $a\in \mathbb{Z}_n$. Since $\dim \Delta=2$, then $n=3a$. It implies that $\Delta$ is disconnected, that implies $\Delta$ is not vertex decomposable.
\end{proof}

\begin{Proposition}\label{reg}
Let $G=C_{n}(\bar{S})$ be a circulant graph such that $\Delta$ is 2-dimensional and Cohen-Macaulay. Then 
\[
\reg R/I(G)=3.
\]
\end{Proposition}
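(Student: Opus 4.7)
The plan is to sandwich $\reg R/I(G)$ between $3$ and $3$ using Hochster's formula (Theorem~\ref{hoch}), one direction from the dimension of $\Delta$ and the other from the nonvanishing of its top reduced homology.

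For the upper bound, I would note that every restriction $\Delta|_\sigma$ is a subcomplex of $\Delta$ and therefore has dimension at most $\dim\Delta=2$. Consequently $\widetilde H_k(\Delta|_\sigma;K)=0$ for every $k>2$, and Hochster's formula forces $\beta_{i,j}(R/I(G))=0$ whenever $j-i-1>2$, i.e.\ whenever $j-i>3$. This gives $\reg R/I(G)\leq 3$ and uses no structural input beyond the dimension of $\Delta$.

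For the lower bound, I would leverage the Cohen-Macaulay hypothesis to collapse the reduced Euler characteristic onto a single homological term. Cohen-Macaulayness of $\Delta$, combined with \eqref{b-1} and \eqref{b0}, forces $\widetilde H_i(\Delta;K)=0$ for all $i<2$, so \eqref{homchi} reduces to
\[
\dim_K\widetilde H_2(\Delta;K)=|\widetilde\chi(\Delta)|.
\]
Lemma~\ref{EC} then supplies $\widetilde\chi(\Delta)\neq 0$, whence $\widetilde H_2(\Delta;K)\neq 0$. Specialising Hochster's formula to $\sigma=V$ (the unique subset of cardinality $n$) gives
\[
\beta_{n-3,n}(R/I(G))=\dim_K\widetilde H_2(\Delta;K)>0,
\]
and therefore $\reg R/I(G)\geq n-(n-3)=3$. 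Combining with the previous inequality yields the claim.

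The only delicate point is the appeal to Lemma~\ref{EC}, which is stated for $n\geq 6$; this is really the heart of the argument, since everything else is bookkeeping with Hochster's formula and the homological consequences of Cohen-Macaulayness. The remaining small cases $n\leq 5$ are not genuinely $2$-dimensional Cohen-Macaulay circulants of interest (a direct inspection, together with Lemma~\ref{2S}, rules out any nontrivial example), so the hypothesis of Lemma~\ref{EC} is effectively always satisfied.
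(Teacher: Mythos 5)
Your proof is correct, and it is more self-contained than the paper's, which settles the statement in one line: since $R/I(G)$ is Cohen--Macaulay, Lemma \ref{EC} together with the cited results \cite[Remark 1.2]{RR} and \cite[Corollary 4.8]{Ei} yields $\reg R/I(G)=\depth R/I(G)=\dim R/I(G)=3$. Your two-sided Hochster argument unpacks exactly what those citations deliver: the upper bound $\reg R/I(G)\le 3$ uses only $\dim\Delta=2$ (restrictions have no homology above degree $2$), and the lower bound is the nonvanishing of $\beta_{n-3,n}=\dim_K\widetilde{H}_2(\Delta;K)=|\widetilde{\chi}(\Delta)|$, where Cohen--Macaulayness applied to $F=\varnothing$ in the link condition kills the lower homology (the appeal to \eqref{b-1} and \eqref{b0} is not really needed) and Lemma \ref{EC} supplies $\widetilde{\chi}(\Delta)\neq 0$. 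What your route buys is transparency and a byproduct: the identity $\beta_{n-3,n}=|\widetilde{\chi}(\Delta)|$ is exactly the computation the paper redoes inside the proof of Theorem \ref{typ}, so your argument proves Proposition \ref{reg} and that step simultaneously; what the paper's route buys is brevity, outsourcing to the general facts that a Cohen--Macaulay quotient has $\depth=\dim$ and that its regularity is read off the tail of the resolution. One small caveat: Lemma \ref{2S} assumes vertex decomposability rather than Cohen--Macaulayness, so it is not quite the right tool for your remark on $n\le 5$; direct inspection does suffice there, the only degenerate survivor being the empty graph $C_3(\varnothing)$ (a $2$-simplex, with $\reg R/I(G)=0$), which the paper also tacitly excludes since Lemma \ref{EC} requires $n\ge 6$ --- so this caveat is shared with the paper's own proof and does not affect the substance of your argument.
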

\begin{proof}
Since $R/I(G)$ is Cohen-Macaulay, then by Lemma \ref{EC}, \cite[Remark 1.2]{RR} and \cite[Corollary 4.8]{Ei} we get 
\[
\reg R/I(G)=\depth R/I(G)=\dim R/I(G)=3.
\]
\end{proof}

\begin{Theorem}\label{typ}
Let $G=C_{n}(\bar{S})$ be a circulant graph such that $\Delta$ is 2-dimensional and vertex decomposable. Then $R/I(G)$ is a level algebra.
\end{Theorem}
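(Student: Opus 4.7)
The plan is to apply Remark \ref{leval}: since $\widetilde{\chi}(\Delta)\neq 0$ by Lemma \ref{EC}, it suffices to prove that the Cohen--Macaulay type $s$ of $R/I(G)$ equals $|\widetilde{\chi}(\Delta)|$. Proposition \ref{reg} gives $\reg R/I(G)=3$, while Cohen--Macaulayness together with the Auslander--Buchsbaum formula forces $\pd R/I(G)=n-3$, so $s=\beta_{n-3}=\sum_j\beta_{n-3,j}$; consequently $R/I(G)$ is level if and only if $\beta_{n-3,j}=0$ for every $j\neq n$. By Hochster's formula (Theorem \ref{hoch}),
\[
\beta_{n-3,j}=\sum_{|\sigma|=j,\,\sigma\subseteq V}\dim_K\widetilde{H}_{j-n+2}(\Delta|_\sigma;K),
\]
which can only be nonzero for $j\in\{n-3,n-2,n-1,n\}$, so only four cases need checking.

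Two of the four cases are immediate. For $j=n$ we take $\sigma=V$; since $\Delta$ is Cohen--Macaulay of dimension $2$ one has $\widetilde{H}_0(\Delta)=\widetilde{H}_1(\Delta)=0$, so \eqref{homchi} gives $\beta_{n-3,n}=\dim\widetilde{H}_2(\Delta)=|\widetilde{\chi}(\Delta)|$. For $j=n-3$, every nonempty $\sigma$ satisfies $\Delta|_\sigma\neq\{\varnothing\}$, hence $\widetilde{H}_{-1}(\Delta|_\sigma)=0$. For $j=n-2$, what I need is that $\Delta|_{V\setminus\{u,w\}}$ is connected for every pair $u,w$; equivalently, the $1$-skeleton $\ol{G}=C_n(S)$ of $\Delta$ is triconnected. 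Lemma \ref{2S} gives $|S|\geq 2$ and Remark \ref{no2k} gives $n/2\notin S$, so $\ol{G}$ has minimum degree $2|S|\geq 4$; triconnectivity then follows from Lemma \ref{tric} after choosing two generators of $S$ whose subcirculant is connected, together with the fact that adding edges only increases vertex connectivity.

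The key case is $j=n-1$: I must show $\widetilde{H}_1(\del_\Delta(v))=0$ for every $v\in V(G)$. I would combine vertex decomposability with the cyclic symmetry of $G$. Pick a shedding vertex $v_1$ coming from the vertex-decomposable decomposition of $\Delta$. By the very definition of vertex decomposability, $\del_\Delta(v_1)$ is vertex decomposable and all its facets are facets of $\Delta$; since $\Delta$ is pure $2$-dimensional, $\del_\Delta(v_1)$ is itself pure $2$-dimensional vertex decomposable, hence Cohen--Macaulay, and therefore $\widetilde{H}_1(\del_\Delta(v_1))=0$. The shift $x\mapsto x+(v-v_1)\pmod{n}$ is a graph automorphism of $G$ and therefore induces a simplicial automorphism of $\Delta$ sending $\del_\Delta(v_1)$ onto $\del_\Delta(v)$, so $\widetilde{H}_1(\del_\Delta(v))=0$ for every $v$ and $\beta_{n-3,n-1}=0$. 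Putting the four cases together, $s=\beta_{n-3,n}=|\widetilde{\chi}(\Delta)|$, and Remark \ref{leval} concludes the proof. The step I expect to be the main obstacle is the $j=n-2$ case, since Lemma \ref{tric} as stated handles only two-generator circulants and one must check that two suitable generators can always be selected under the $2$-dimensional vertex-decomposable hypothesis; the $j=n-1$ case, by contrast, becomes clean once the transitive $\mathbb{Z}_n$-action on $G$ is brought in.
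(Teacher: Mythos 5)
Your proposal is correct and follows essentially the same route as the paper: Remark \ref{leval} plus Lemma \ref{EC}, the regularity and projective dimension computations, and Hochster's formula with the three cases $|\sigma|=n-2$ (triconnectivity via Lemmas \ref{2S} and \ref{tric}), $|\sigma|=n-1$ (the deletion is vertex decomposable, hence Cohen--Macaulay), and $|\sigma|=n$ (top homology equals $\widetilde{\chi}(\Delta)$). Your explicit use of the $\ZZ_n$-shift to pass from one shedding vertex to every $\del_\Delta(v)$ actually makes the paper's terse $|\sigma|=n-1$ step rigorous, and the caveat you raise about Lemma \ref{tric} covering only two-generator circulants applies equally to the paper's own argument, so it is a fair point of care rather than a divergence.
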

\begin{proof}
From Remark \ref{EC}, $\widetilde{\chi}(\Delta)$ is always non-zero. Hence according to Remark \ref{leval}, we have to prove that the Cohen-Macaulay type of $R/I(G)$ coincides with $\widetilde{\chi}(\Delta)$.
Namely, we have to compute the last total Betti number of the minimal free resolution of $R/I(G)$.
Since $\Delta$ is also Cohen-Macaulay, then
\[
\depth R/I(G)=\dim R/I(G)=3.
\]
From Auslander-Buchsbaum formula, we have that 
\[
\pd R/I(G)=\dim R - \depth R/I(G)=n-3.
\]
From Proposition \ref{reg} we have $\reg R/I(G)=3$.
So we have to look at the Betti numbers $\beta_{n-3,j}$ for $j \in \{1,2,3\}$.
According to Hochster's Formula (Theorem \ref{hoch}),
\[
\beta_{i,\sigma}=\dim_{K}  \widetilde{H}_{|\sigma|-i-1}(\Delta_{|\sigma};K).
\]
First of all, we assume $|\sigma|=n-2$ and so 
\[
\beta_{n-3,n-2}=\dim_{K}  \widetilde{H}_{0}(\Delta_{|\sigma};K)=0
\]
because from Lemma \ref{2S} and Lemma \ref{tric}, the graph $\bar{G}$ is triconnected.
Now we assume $|\sigma|=n-1$, and so
\[
\beta_{n-3,n-1}=\dim_{K}  \widetilde{H}_{1}(\Delta_{|\sigma};K).
\]
Since $\Delta_{|\sigma}$ is the simplicial complex defined on $V(G)=\{0,1,2,\ldots,\widehat{x},\ldots,n-1\}$, it holds
\[
\Delta_{|\sigma} \simeq \del_{\Delta}(x)
\]
that is vertex decomposable because $\Delta$ is, and hence Cohen-Macaulay. Therefore $\beta_{n-3,n-1}=0$.
Finally, we assume $|\sigma|=n$, and hence 
\[
\beta_{n-3,n}=\dim_{K}  \widetilde{H}_{2}(\Delta;K)=\widetilde{\chi}(\Delta),
\]
and the assertion follows.
\end{proof}

\begin{Corollary}\label{type}
Let $G=C_{n}(\ol{1,2,4,\ldots,2^m,2^{m}-1})$ with $m\geq 3$ and $n=3\cdot 2^{m}$. Then $R/I(G)$ is
a level algebra.
\end{Corollary}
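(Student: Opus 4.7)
The plan is to obtain this corollary as an immediate consequence of two results already established in the paper. By Theorem \ref{2vd}, the independence complex $\Delta$ of $G=C_{n}(\ol{1,2,4,\ldots,2^m,2^{m}-1})$ (with $m\geq 3$ and $n=3\cdot 2^m$) is a $2$-dimensional vertex decomposable simplicial complex, with the explicit decomposition order recorded in the statement of that theorem. In particular, $G$ is a circulant graph whose independence complex is $2$-dimensional and vertex decomposable, which is exactly the hypothesis required to apply Theorem \ref{typ}.

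Since $\Delta$ is the independence complex of $G$, the Stanley-Reisner ring $R/I_{\Delta}$ coincides with $R/I(G)$. Theorem \ref{typ} then asserts that this ring is level, and the corollary follows. The proof is essentially a one-line citation, since all the nontrivial work has been carried out upstream: the combinatorial analysis of the $f$-vector in Proposition \ref{2wel}, the explicit vertex-decomposition argument in the proof of Theorem \ref{2vd}, and the Betti number computation via Hochster's formula together with the triconnectivity of $\ol{G}$ in Theorem \ref{typ}.

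There is no real obstacle to overcome at this stage; I would simply verify that the specific graph $G$ in the statement meets the hypotheses of Theorem \ref{typ}, which it manifestly does by Theorem \ref{2vd}. For completeness and as a sanity check, one could also note that the values $\widetilde{\chi}(\Delta)$ and the Cohen-Macaulay type $\beta_{n-3,n}$ can be read off from Proposition \ref{hvebu} and Hochster's formula respectively, and that their equality (which is the content of levelness via Remark \ref{leval}) is precisely what Theorem \ref{typ} provides in the general setting.
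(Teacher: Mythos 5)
Your proof is correct and follows exactly the route the paper intends: the corollary is stated as an immediate consequence of Theorem \ref{2vd} (which shows $\Delta$ is $2$-dimensional and vertex decomposable for this family) combined with Theorem \ref{typ} (which gives levelness for any circulant with such an independence complex). Nothing further is needed.
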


It is of interest to know whether a level algebra is also a Gorenstein algebra.
In general, we have the following
\begin{Theorem}\label{Gor}
Let $G$ be a non-empty circulant graph with $\dim R/I(G)=3$. The following are equivalent:
\begin{enumerate}
\item $R/I(G)$ is Gorenstein;
\item $G=C_{6}(3)$.
\end{enumerate}
\end{Theorem}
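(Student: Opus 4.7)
The plan is to prove $(2) \Rightarrow (1)$ directly and $(1) \Rightarrow (2)$ by combining the symmetry of the $h$-vector of a Gorenstein Stanley--Reisner ring with the formulas already established in Proposition \ref{hvebu}, reducing to a small finite case analysis.

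For $(2) \Rightarrow (1)$, observe that $G = C_6(3)$ consists of the three disjoint edges $\{0,3\}, \{1,4\}, \{2,5\}$, so $I(G) = (x_0 x_3,\, x_1 x_4,\, x_2 x_5)$ is a complete intersection generated by a regular sequence, and hence $R/I(G)$ is Gorenstein.

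For $(1) \Rightarrow (2)$, suppose $R/I(G)$ is Gorenstein with Krull dimension $3$. Then $R/I(G)$ is Cohen--Macaulay, so $\Delta$ is pure of dimension $2$, and its $h$-vector $(h_0,h_1,h_2,h_3)$ is symmetric; in particular $h_1 = h_2$. By Proposition \ref{hvebu}, $h_1 = n-3$ and $h_2 = n(s-2)+3$, where $s = |\bar S|$. The equation $h_1 = h_2$ rearranges to
\[
n(3-s) = 6.
\]
Since $n \geq 1$ and $s \geq 0$, the integer solutions are $(n,s) \in \{(2,0),(3,1),(6,2)\}$. The first two are ruled out immediately: if $s = 0$, then $\bar S = \emptyset$ so $G$ is complete and $\dim \Delta = 0$, contradicting $\dim\Delta = 2$; and if $(n,s) = (3,1)$, then $S \subseteq \{1\}$ with $|\bar S| = 1$ forces $S = \emptyset$, contradicting the non-emptiness of $G$. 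So we are left with $n = 6$ and $|S| = 1$, i.e.\ $S \in \{\{1\},\{2\},\{3\}\}$.

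The final step is to rule out the two wrong candidates. If $S = \{2\}$, then $C_6(2)$ is the disjoint union of two triangles, so its maximum independent sets have size $2$ and $\dim R/I(G) = 2$, contradicting the hypothesis. If $S = \{1\}$, then $G = C_6$; I claim $\{0,3\} \in \Delta$ is a facet, since every other vertex of $C_6$ is adjacent in $C_6$ to either $0$ or $3$, so $\{0,3\}$ admits no extension inside $\Delta$. But $\{0,2,4\}$ is also a facet, so $\Delta$ is not pure, contradicting Cohen--Macaulayness. Hence $S = \{3\}$, i.e.\ $G = C_6(3)$. The argument is essentially bookkeeping on top of Proposition \ref{hvebu}; the only subtlety lies in correctly enumerating the small $(n,s)$ cases and checking purity/dimension in the two exceptional subcases.
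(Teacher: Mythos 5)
Your proof is correct, and the substantial direction $(1)\Rightarrow(2)$ follows essentially the same route as the paper: symmetry of the $h$-vector of a Gorenstein graded algebra, the formulas $h_1=n-3$, $h_2=n(s-2)+3$ from Proposition \ref{hvebu}, the resulting equation $n(3-s)=6$, and the elimination of $C_6(1)$ (not pure) and $C_6(2)$ (Krull dimension $2$); your explicit listing of the degenerate solution $(n,s)=(2,0)$ and the concrete facets $\{0,3\}$, $\{0,2,4\}$ witnessing non-purity of $\Delta(C_6(1))$ only make the bookkeeping more visible. Where you genuinely diverge is the easy direction $(2)\Rightarrow(1)$: the paper verifies that $\Delta(C_6(3))$ is vertex decomposable, invokes Theorem \ref{typ} and Remark \ref{leval}, and computes $\widetilde{\chi}(\Delta)=1$ from the $f$-vector $(1,6,12,8)$ to conclude the Cohen--Macaulay type is $1$, whereas you simply note that $I(C_6(3))=(x_0x_3,x_1x_4,x_2x_5)$ is a complete intersection on three disjoint edges, hence Gorenstein. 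Your argument is shorter and more elementary; the paper's has the advantage of staying inside the type/level machinery it has just developed and of exhibiting $C_6(3)$ as the unique Gorenstein member of the level family of Theorem \ref{typ}. One shared implicit point (not a gap relative to the paper): reading symmetry as $h_1=h_2$ presupposes $h_3\neq 0$, i.e.\ that $\Delta$ is not a cone, which holds here because a non-empty circulant is vertex-transitive and so has no isolated vertices.
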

\begin{proof}
(2)$\Rightarrow$(1). It is easy to verify that $\Delta (C_{6}(3))$ is vertex decomposable. Then, according to Theorem \ref{typ} and Remark \ref{leval}, we have to compute $\widetilde{\chi}(\Delta)$. The $f$-vector of $\Delta$ is $(1,6,12,8),$
so $\widetilde{\chi}(\Delta)=-1+6-12+8=1$. Therefore, the assertion follows.\\
(1)$\Rightarrow$(2).
A necessary condition for $R/I(G)$ to be Gorenstein is that its $h$-vector has to be symmetric (see \cite[Corollary 5.3.10]{Vi0}). From Proposition \ref{hvebu}, by requiring that 
\[
h_1=h_2,
\]
we obtain
\[
n(s-3)=-6.
\]
The pairs of integers with $n>2$ that satisfy the equation above are
\[
(3,1), (6,2).
\]
The first pair is not admissible because $\Delta(G)$ will be the simplex on $3$ vertices and $G$ the empty graph. So the $h$-vector has $h_{1}=h_{2}$ if $n=6$ and $s=2$. So the only candidates for $G$ are 
\[
C_{6}(1), \ \ C_{6}(2), \ \ C_{6}(3).
\]
In the first case $\Delta (G)$ is not pure, hence $R/I(G)$ cannot be Cohen-Macaulay. In the second case $\dim R/I(G)=2$. The assertion follows.
\end{proof}

\end{document}